\newcommand{\Q}{\mathbb{Q}}
\newcommand{\R}{\mathbb{R}}
\newcommand{\Z}{\mathbb{Z}}
\renewcommand{\H}{\mathbb{H}}
\newcommand{\SL}{\mathrm{SL}}
\newcommand{\GL}{\mathrm{GL}}
\newcommand{\SO}{\mathrm{SO}}
\newcommand{\PSL}{\mathrm{PSL}}
\newcommand{\PGL}{\mathrm{PGL}}
\newcommand{\Isom}{\mathrm{Isom}}
\newtheorem{theorem}{Theorem}[section]
\newaliascnt{lemma}{theorem}
\newtheorem{lemma}[lemma]{Lemma}
\newaliascnt{cor}{theorem}
\newtheorem{cor}[cor]{Corollary}
\newaliascnt{prop}{theorem}
\newtheorem{prop}[prop]{Proposition}
\newaliascnt{con}{theorem}
\newaliascnt{claim}{theorem}
\theoremstyle{remark}
\newaliascnt{remark}{theorem}
\newtheorem{remark}[remark]{Remark}
\newaliascnt{defn}{theorem}
\newaliascnt{question}{remark}
\newtheorem{question}[question]{Question}
\numberwithin{equation}{section}
\begin{document}

\title[Special subgroups of Bianchi groups]{Special subgroups of Bianchi groups}

\author{Michelle Chu}
\address{The University of Texas at Austin}
%\curraddr{}
\email{mchu@math.utexas.edu}
%\thanks{}

\begin{abstract} We determine C-special subgroups of the Bianchi groups of index bounded above by 120 by effectivising the arguments of Agol-Long-Reid. These subgroups are congruence of level 2 or 4 and retract to the free group on two generators.  As a consequence, we find a C-special 20-sheeted cover of the figure-eight knot complement. We also determine C-special congruence subgroups for a family of cocompact arithmetic Kleinian groups.
\end{abstract}

\maketitle

%%%%%%%%%%          %%%%%%%%%%          %%%%%%%%%%
\section{Introduction}
A 3-manifold $M$ is said to have a \emph{virtual} property if some finite-sheeted cover of $M$ has that property. Similarly, a group $\Gamma$ has a \emph{virtual} property if some finite-index subgroup of $\Gamma$ has that property.
It was shown in \cite{cooperlongreid} that non-compact finite volume 3-manifolds virtually contain closed embedded essential surfaces.  
More recent progress in 3-manifold theory has determined the virtual Haken and the virtual fibering conjectures for all finite volume hyperbolic 3-manifolds, asserting that every finite volume hyperbolic 3-manifold virtually contains an embedded essential surface, and furthermore, is virtually a surface bundle over the circle.
These results are implied by a stronger theorem which states that the fundamental groups of finite volume 3-manifolds are virtually special (see \cite{virtual_haken} for the closed case and \cite{wise_manuscript} or \cite{groves_manning} for the cusped case).
A group is \emph{special} as in \cite{haglund_wise} if it embeds in a right-angled Artin group (\emph{A-special}) or in a right-angled Coxeter group (\emph{C-special}). Every right-angled Artin group (RAAG) is a finite-index subgroup of a right-angled Coxeter group (RACG), so every special group embeds in a RACG \cite{MR1783167}.
Virtually special groups inherit many nice properties from the RACG. In particular, for fundamental groups of hyperbolic 3-manifolds, virtually special (together with tameness \cite{Agol_tameness,tame} and Canary's covering theorem \cite{canary}) implies LERF and both the virtual Haken and the virtual fibering conjectures.

The goal of this note is to address the following question:

\begin{question} \label{ques: main}
Given a virtually special group, can one determine a finite-index special subgroup? Can one bound its index?
\end{question}

Prior to Agol's and Wise's results \cite{virtual_haken,wise_manuscript}, virtually special was known for several classes 3-manifold groups. Agol, Long, and Reid showed in \cite{ALR} that the Bianchi groups are virtually C-special. Later in \cite{BHW}, Bergeron, Haglund, and Wise showed that the fundamental group of an arithmetic hyperbolic manifold of simplest type is virtually C-special.
Recently, \autoref{ques: main} was answered for the Seifert-Weber dodecahedral space, which is an arithmetic hyperbolic manifold of simplest type. Spreer and Tillmann constructed in \cite{STDodecahedra} an A-special cover of the Seifert-Weber dodecahedral space with degree 60. In this paper we algebraically construct C-special covers of the Bianchi groups.

\begin{theorem}\label{thm: main} Let $m$ be a square-free positive integer and $\mathcal{O}_m$ the ring of integers in the quadratid imaginary field $\Q(\sqrt{-m})$.
The Bianchi group $\PSL(2,\mathcal{O}_m)$ contains a subgroup $\Delta_m$ which embeds in a RACG and has index
\begin{equation*}
[\PSL(2,\mathcal{O}_m): \Delta_m]=
\begin{cases}
	48 & \text{if }m\equiv 1,2 \mod (4) \\
	120 & \text{if }m\equiv 3 \mod (8) \\
	72 & \text{if }m\equiv 7 \mod (8)
\end{cases}
\end{equation*}
where $\Delta_m$ is a principal congruence subgroup of level 2 if $m\equiv 1,2 \mod (4)$ and is otherwise congruence of level 4. 
\end{theorem}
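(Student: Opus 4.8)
The plan is to carry out the Agol--Long--Reid cubulation \cite{ALR} of the Bianchi orbifold explicitly, and to pin down the exact congruence condition that forces the resulting quotient cube complex to be special. Write $\mathcal{O}=\mathcal{O}_m$, let $\PSL(2,\mathcal{O})$ act on $\H^3$ (upper half-space) through its action on $\widehat{\C}$, and let $P_0\subset\H^3$ be the totally geodesic plane lying above $\R\cup\{\infty\}$; its setwise stabilizer is $\PSL(2,\Z)=\PSL(2,\mathcal{O}\cap\R)$, extended when $m=1$ by the side-interchanging element $\mathrm{diag}(i,-i)$. Let $\mathcal{W}$ be the $\PSL(2,\mathcal{O})$-orbit of $P_0$, enlarged if necessary by the orbits of the finitely many further rational geodesic planes needed for $(\H^3,\mathcal{W})$ to be a proper wall space carrying a cocompact $\PSL(2,\mathcal{O})$-action. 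The standard cubulation of a wall space (Sageev) then produces a finite-dimensional CAT(0) cube complex $C$ equipped with a proper cocompact $\PSL(2,\mathcal{O})$-action whose hyperplanes are equivariantly indexed by $\mathcal{W}$. A torsion-free finite-index subgroup $\Delta\le\PSL(2,\mathcal{O})$ acts freely on $C$ with $\pi_1(C/\Delta)\cong\Delta$, and by Haglund--Wise \cite{haglund_wise} it embeds in a RAAG --- hence, by \cite{MR1783167}, in a RACG --- as soon as $C/\Delta$ is \emph{special}: every hyperplane two-sided, no hyperplane self-intersecting, no hyperplane directly self-osculating, and no two hyperplanes inter-osculating. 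So the problem reduces to exhibiting the smallest congruence subgroup $\Delta_m$ for which these four conditions hold.

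Each condition becomes a congruence statement about the matrices involved. Two-sidedness fails exactly when some $\gamma\in\Delta_m$ stabilizes a wall while interchanging its two sides; a hyperplane self-intersects exactly when $\gamma W$ crosses $W$ transversely for some $\gamma\in\Delta_m$ with $\gamma W\neq W$; self-osculation and inter-osculation are the analogous incidences of $\gamma W$ with $W$, respectively with a second wall, inside $C$. For $W=P_0$, the relative position of $\gamma P_0$ and $P_0$ --- coincidence, transverse crossing, tangency at infinity, or osculation in $C$ --- is governed by explicit discriminant-type conditions on the entries of $\gamma$, since $\gamma$ carries $\widehat{\R}$ to the circle or line $\gamma\widehat{\R}\subset\widehat{\C}$; conjugating reduces an arbitrary pair of walls to this model case, once one also keeps track of the $\PSL(2,\mathcal{O})$-conjugates of $\Delta_m$. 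Reducing these conditions modulo $2$, one checks that $\Gamma(2)$, the principal congruence subgroup of $\PSL(2,\mathcal{O})$ of level $2$, already satisfies all four precisely when $2$ is ramified in $\mathcal{O}$, i.e.\ when $m\equiv1,2\pmod{4}$; there one takes $\Delta_m=\Gamma(2)$. When $2$ is unramified, i.e.\ $m\equiv3\pmod{4}$, exactly one osculation-type family of pathologies survives modulo $2$, and eliminating it costs precisely one additional $\F_2$-linear congruence condition modulo $4$, cutting out an index-$2$ subgroup $\Delta_m\subsetneq\Gamma(2)$ which is congruence of level $4$. I expect the main obstacle to be the verification that these minimal conditions are genuinely sufficient --- in particular that the level-$4$ subgroup, and not the full $\Gamma(4)$, already removes every self- and inter-osculation --- uniformly over all $m$ in each congruence class, which demands a complete and uniform account of how translates of rational geodesic planes can meet in $C$.

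For the index, strong approximation makes the reduction $\SL(2,\mathcal{O})\to\SL(2,\mathcal{O}/2)$ surjective, and since $-I\equiv I\pmod{2}$ one gets $[\PSL(2,\mathcal{O}):\Gamma(2)]=|\SL(2,\mathcal{O}/2)|$. Now $\mathcal{O}/2\cong\F_2[\epsilon]/(\epsilon^2)$ when $m\equiv1,2\pmod{4}$, $\mathcal{O}/2\cong\F_2\times\F_2$ when $m\equiv7\pmod{8}$, and $\mathcal{O}/2\cong\F_4$ when $m\equiv3\pmod{8}$, so that $|\SL(2,\mathcal{O}/2)|$ equals $48$, $36$, and $60$, respectively. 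Multiplying by $[\Gamma(2):\Delta_m]$, which is $1$ in the ramified case and $2$ in the two unramified cases, yields the stated indices $48$, $72$, and $120$. Finally, in $C/\Delta_m$ the carrier of the hyperplane through $P_0$ has fundamental group $\mathrm{Stab}_{\Delta_m}(P_0)=\Delta_m\cap\PSL(2,\Z)=\Gamma_{\Z}(2)$, the level-$2$ principal congruence subgroup of $\PSL(2,\Z)$, which is free of rank $2$; the retraction properties of special cube complexes then exhibit $\Delta_m$ as retracting onto this free group, as asserted in the abstract.
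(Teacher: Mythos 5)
Your proposal is a strategy outline rather than a proof, and the gap sits exactly where the theorem's content lies. Everything specific to the claimed indices is asserted, not established: that the Sageev cubulation of $\H^3$ dual to the orbit of the plane over $\widehat{\R}$ (plus unspecified ``finitely many further rational planes'') is proper and cocompact for a cusped lattice; that the four hyperplane pathologies translate into congruence conditions; that those conditions are satisfied by the full level-$2$ principal congruence subgroup exactly when $2$ ramifies; and that in the case $m\equiv 3\pmod 4$ a single extra mod-$4$ condition of index $2$ (rather than, say, all of level $4$) suffices. You acknowledge this yourself (``I expect the main obstacle to be the verification\dots''), but that verification is the theorem. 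In particular, the identification of \emph{which} index-$2$ subgroup of $\Gamma(2)$ works is precisely the nontrivial output of the paper's computation (the condition $b_1\equiv c_1 \bmod 2$ in the $\frac{1+\sqrt{-m}}{2}$-coordinates), and nothing in your sketch produces it or shows it is forced by an osculation analysis. There are also unaddressed technical points along your route: cocompactness of a geodesic-wall cubulation for a cusped group is delicate, and freeness of the $\Delta_m$-action (torsion-freeness of these congruence subgroups) is used but not checked. The index computation and the final retraction remark are fine, but they are the easy part.

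It is also worth noting that your route is not the Agol--Long--Reid argument you cite, and it is not the paper's. The paper (following ALR) never cubulates $\H^3$: it uses the explicit isomorphism $\varphi_m$ from $\PSL(2,\mathcal{O}_m)$ into $\SO^+(Q_m;\Z)$, writes $m$ as a sum of four squares to build an explicit rational equivalence $A_m$ between $Q_m\oplus\langle 2m,2m,2m\rangle$ and the standard form $F_6$ of signature $(6,1)$, and then verifies by direct matrix computation that $A_m \varphi_m(N) A_m^{-1}\equiv I \pmod 2$ for $N$ in the level-$2$ subgroup (respectively in the index-$2$ subgroup $\Delta_m$ when $m\equiv3\pmod4$). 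Specialness is then inherited for free, because $\mathrm{O}^+(F_6;\Z)_{(2)}$ is itself a right-angled Coxeter group by Everitt--Ratcliffe--Tschantz, so no hyperplane-pathology analysis is ever needed. If you want to salvage your approach, you would have to supply the complete wall-interaction analysis uniformly in $m$; the algebraic route replaces all of that with one finite congruence check, which is why it yields effective bounds.
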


This result relies on making effective the strategy exploited by Agol, Long, and Reid in \cite{ALR}. The main idea is to realize quadratic forms associated to the Bianchi groups as sub-forms of the standard form of signature $(6,1)$. The orthogonal group $O(6,1;\Z)$ contains the reflection group of a right-angled 6-dimensional hyperbolic Coxeter polytope. In fact, each $\H^3/\Delta_m$ immerses totally geodesically in this reflection orbifold.

A particular subgroup of a Bianchi group which has attracted much interest is the fundamental group of the figure-eight knot complement. The holonomy representation of the hyperbolic structure is given by \cite{Riley_quadpargp}
\begin{equation*}
\Gamma_8:=\left\langle \begin{pmatrix} 1&1\\0&1 \end{pmatrix},\begin{pmatrix} 1&0\\ \frac{1+\sqrt{-3}}{2}&1 \end{pmatrix} \right\rangle \subset \PSL(2,\mathcal{O}_3) \text{ of index 12.}
\end{equation*}
It can be shown that the intersection $\Gamma_8\cap \Delta_3$ is a subgroup of $\Gamma_8$ of index 20 and we get the following corollary. 

\begin{cor}
The figure-eight knot complement has a special finite-sheeted cover of degree 20.
\end{cor}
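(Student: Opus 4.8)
The plan is to take the cover of the figure-eight knot complement corresponding to the subgroup $\Gamma_8\cap\Delta_3\le\Gamma_8$ and check the three things it must satisfy: that it is a genuine connected finite-sheeted cover, that its degree is $20$, and that it is special. First, since $\Gamma_8$ is the fundamental group of a finite-volume hyperbolic $3$-manifold it is torsion-free, hence so is $\Gamma_8\cap\Delta_3$; and since $[\PSL(2,\mathcal{O}_3):\Delta_3]=120$ by \autoref{thm: main} (here $3\equiv 3\bmod 8$), the subgroup $\Gamma_8\cap\Delta_3$ has finite index in $\Gamma_8$. So it is the fundamental group of a connected finite-sheeted cover $M\to S$ of the figure-eight knot complement $S$, and $\deg(M\to S)=[\Gamma_8:\Gamma_8\cap\Delta_3]$, which is $20$ by the index computation recorded just above the statement.

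It then remains only to see that $M$ is special, and this is immediate from \autoref{thm: main}: that theorem provides an embedding of $\Delta_3$ (the group $\Delta_m$ for $m=3$) into a right-angled Coxeter group $W$, and restricting this embedding to the subgroup $\pi_1(M)=\Gamma_8\cap\Delta_3$ exhibits $\pi_1(M)$ as a subgroup of $W$. Hence $\pi_1(M)$ is C-special and $M$ is the desired special cover, of degree $20$.

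The only input that is not already immediate is the equality $[\Gamma_8:\Gamma_8\cap\Delta_3]=20$, which I would verify by reducing modulo the level. Since $\Delta_3$ is congruence of level $4$ it contains the normal subgroup $\Gamma(4)=\ker\!\big(\PSL(2,\mathcal{O}_3)\to Q\big)$ with $Q:=\PSL(2,\mathcal{O}_3/4\mathcal{O}_3)$, so the left action of $\PSL(2,\mathcal{O}_3)$ on the coset space $\PSL(2,\mathcal{O}_3)/\Delta_3\cong Q/\overline{\Delta_3}$ factors through $Q$; applying orbit--stabilizer to the trivial coset gives $[\Gamma_8:\Gamma_8\cap\Delta_3]=[\overline{\Gamma_8}:\overline{\Gamma_8}\cap\overline{\Delta_3}]$, where overlines denote images in $Q$. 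Since $-3\equiv 5\bmod 8$ the prime $2$ is inert in $\Q(\sqrt{-3})$, so $\mathcal{O}_3/4\mathcal{O}_3$ is a local ring of order $16$ with residue field $\F_4$, whence $|Q|=1920$ and $|\overline{\Delta_3}|=1920/120=16$; one then writes down $\overline{\Gamma_8}$ from the images of the two parabolic generators of $\Gamma_8$ and reads off the orbit size. As a consistency check the answer must agree with $[\PSL(2,\mathcal{O}_3):\Gamma_8\cap\Delta_3]=12\cdot 20=240=120\cdot 2$, i.e.\ with $[\Delta_3:\Gamma_8\cap\Delta_3]=2$, which can alternatively be read off from the action of $\Delta_3$ on the $12$-point set $\PSL(2,\mathcal{O}_3)/\Gamma_8$. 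I expect the main obstacle to be organising this computation: it requires the explicit index-$120$, level-$4$ congruence subgroup $\Delta_3$ produced in the proof of \autoref{thm: main}, and with that in hand the rest is finite bookkeeping inside $Q$.
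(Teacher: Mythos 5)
Your proposal follows essentially the same route as the paper: the cover is the one corresponding to $\Gamma_8\cap\Delta_3$, specialness is inherited from the embedding of $\Delta_3$ into a RACG given by \autoref{thm: main}, and the degree is the index $[\Gamma_8:\Gamma_8\cap\Delta_3]=20$, which the paper simply asserts ("it can be shown"). Your reduction-mod-$4$/orbit--stabilizer scheme for verifying that index (with $2$ inert, $|Q|=1920$, $|\overline{\Delta_3}|=16$) is a sound and correctly set up way to carry out the finite check the paper leaves implicit, so the argument is correct modulo that bookkeeping, exactly as in the paper.
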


Another consequence of the embedding of $\Delta_m$ in a RACG is that $\Delta_m$ is virtually RFRS \cite{RFRS} (in fact, contains a RFRS subgroup of index $2^{26}$) and virtually retracts to its geometrically finite subgroups \cite{virt_retracts}.

\begin{prop}\label{prop: retract}
The subgroup $\Delta_m$ retracts onto the free group on 2 generators.
\end{prop}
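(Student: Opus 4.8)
The plan hinges on a simple reduction: since a free group is projective, every epimorphism onto a free group splits, so $\Delta_m$ retracts onto a rank-$2$ free subgroup as soon as it admits \emph{any} surjection onto $F_2$. Concretely, given $\varphi\colon\Delta_m\twoheadrightarrow F_2=\langle x,y\rangle$, choose $a\in\varphi^{-1}(x)$ and $b\in\varphi^{-1}(y)$; then $x\mapsto a$, $y\mapsto b$ defines a section, whence $\langle a,b\rangle\cong F_2$ and $\varphi$ is a retraction of $\Delta_m$ onto it. So it suffices to construct a non-abelian free quotient of $\Delta_m$.

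To do this I would use the embedding $\Delta_m\hookrightarrow W$ from the proof of \autoref{thm: main}, where $W$ is the reflection group of the right-angled polytope $P\subset\H^6$; by the level-$2$ (resp.\ level-$4$) congruence condition one may take $\Delta_m$ to lie inside the canonical torsion-free subgroup $W^{(2)}=\ker\!\big(W\to(\Z/2)^{\#\mathrm{facets}(P)}\big)$. Choosing three pairwise disjoint facets of $P$, the reflections in them generate a standard parabolic $W_J\cong(\Z/2)\ast(\Z/2)\ast(\Z/2)$, and the ``erase the remaining reflections'' homomorphism $\rho\colon W\twoheadrightarrow W_J$ is a retraction of $W$. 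Because $\Delta_m\subseteq W^{(2)}$, the image satisfies $\rho(\Delta_m)\subseteq W_J^{(2)}$, and $W_J^{(2)}$ is free (of rank $5$); hence $\rho(\Delta_m)$ is a finitely generated free group, and if it has rank at least $2$ it surjects onto $F_2$, giving $\Delta_m\to\rho(\Delta_m)\twoheadrightarrow F_2$. (One may equally take $W_J$ to be the reflection group of a higher-dimensional right-angled face of $P$ and land $\Delta_m$ in a closed- or punctured-surface subgroup; the argument is the same.)

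The only point needing genuine work is to check that $\rho(\Delta_m)$ is non-cyclic, i.e.\ that $\rho$ does not collapse $\Delta_m$ to a cyclic or trivial group: even though $b_1(\Delta_m)\ge 2$ (in fact $\H^3/\Delta_m$ has several cusps), this by itself does not prevent such a collapse. Here the effective level-$2$/$4$ description of $\Delta_m$ is essential — one has explicit matrix generators, one rewrites each as a word in the reflections in the facets of $P$, and one selects the three disjoint facets so that at least two of these words have non-commuting (hence independent) images under $\rho$, a condition easily certified by Stallings' folding algorithm. I expect matching the congruence generators against the combinatorics of $P$, together with this non-degeneracy check, to be the main obstacle; the rest is formal. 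Finally, it is worth noting that the general virtual-retraction theorem for special groups \cite{virt_retracts} does not by itself yield \autoref{prop: retract}: a group may have a finite-index subgroup that retracts onto $F_2$ without surjecting onto any non-abelian free group (e.g.\ $\PSL(2,\Z)$), so the explicit position of $\Delta_m$ inside the reflection group genuinely enters.
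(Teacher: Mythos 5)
Your reduction is fine: free groups are projective, so any surjection $\Delta_m\twoheadrightarrow F_2$ splits and gives a retraction onto a rank-two free subgroup. The gap is that you never actually produce such a surjection. Your candidate is the composite of $\Delta_m\hookrightarrow W$ with the ``erase the other generators'' map $\rho\colon W\to W_J$ for three pairwise disjoint facets, and you yourself flag the decisive point --- that $\rho(\Delta_m)$ is not cyclic --- as an unperformed computation; nothing in the proposal certifies that a suitable triple of disjoint facets exists or that the folding check succeeds, and without it the argument establishes nothing, since a priori $\rho$ could collapse $\Delta_m$ onto a cyclic group. A second unjustified step is the claim that the image of $\Delta_m$ lies in $W^{(2)}=\ker\big(W\to(\Z/2)^{\#\mathrm{facets}}\big)$. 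Membership in $\SO^+(F_6;\Z)_{(2)}$ does not give this: by \autoref{th: RACG} the RACG $W=\mathrm{O}^+(F_6;\Z)_{(2)}$ is itself the level-$2$ congruence group, so its generating reflections are already $\equiv I \pmod 2$, and a product of two distinct commuting reflections lies in $\SO^+(F_6;\Z)_{(2)}$ but not in $W^{(2)}$. So the containment you use to force $\rho(\Delta_m)\subseteq W_J^{(2)}$ is an additional claim about $\Delta_m$ needing proof; without it $\rho(\Delta_m)$ is merely a subgroup of $(\Z/2)\ast(\Z/2)\ast(\Z/2)$, which (infinite dihedral subgroups, for instance) need not surject onto $F_2$.

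The paper's proof avoids exactly this non-degeneracy problem by running the argument in the opposite direction: rather than mapping $\Delta_m$ onto a quotient and then trying to show the quotient is big, it exhibits an explicit rank-two free subgroup inside $\Delta_m$, namely $\PSL(2,\Z)_{(2)}=\left\langle \left(\begin{smallmatrix}1&2\\0&1\end{smallmatrix}\right),\left(\begin{smallmatrix}1&0\\2&1\end{smallmatrix}\right)\right\rangle$, whose image under $\varphi_m$ and conjugation by $A_m$ sits inside the reflection group of a $2$-dimensional face of the right-angled $6$-polytope. \autoref{lem: retracts} --- which is your same ``kill the remaining reflections'' retraction, together with the combinatorial facts about right-angled polytopes needed to iterate it down to a $2$-face --- then retracts the ambient RACG onto that face group, and by construction the retraction restricts to the identity on $\PSL(2,\Z)_{(2)}$, so collapse is impossible and no computation with the congruence generators is required. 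If you want to complete your approach, the cleanest fix is precisely this: choose $J$ to consist of the facets meeting a fixed $2$-face rather than an abstract disjoint triple, and verify that $\Delta_m$ meets the face group in $\PSL(2,\Z)_{(2)}$; the non-cyclicity you deferred then comes for free.
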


The paper is organized as follows: In \autoref{sec: prelim} we give some preliminaries on orthogonal groups, congruence subgroups, and right-angled Coxeter groups. We introduce Bianchi groups and their associated quadratic forms in \autoref{sec: Bianchi quad form}. The proofs of \autoref{thm: main} and \autoref{prop: retract} are in \autoref{sec: pf of main}. Finally, we describe some cocompact examples in \autoref{sec: cocompact}.

%%%%%%%%%%          %%%%%%%%%%          %%%%%%%%%%
\section{Preliminaries}\label{sec: prelim}

%%%%%%%%%%          %%%%%%%%%%          %%%%%%%%%%
\subsection{Orthogonal groups of quadratic forms}
Let $f$ be a quadratic form with coefficients in a number field $K$. Let $S_f$ be the symmetric matrix associated to $f$. We say that two quadratic forms $f_1$ and $f_2$ are equivalent over $k$ is there exist some matrix $A\in\GL(n,K)$ such that $A^tS_{f_1}A=S_{f_2}$.

For $K$ a real number field, $\mathcal{O}_K$ its ring of integers, and $f$ a quadratic form over $K$ in $n+1$ variables, the Orthogonal group of $f$ is the group
$$ \mathrm{O}(f)=\{M\in\GL(n+1,\R)\ | M^t S_Q M=S_Q \}. $$

Now fix $K$ a totally real number field. Consider a quadratic form $Q$ defined on a vector space $V$ over $K$ of dimension $n+1$ such that $Q$ has signature $(n,1)$ but for every non-identity embedding $\sigma:k\hookrightarrow\R$, $Q^\sigma$ is positive-definite.
Let $\R^{n,1}$ be the $(n+1)$-dimensional vector space  with symmetric bilinear form
\begin{equation*} (u,v) = \frac{1}{2} \left( Q(u+v) - Q(u) - Q(v) \right) \end{equation*}
and symmetric matrix $S_Q$ (the symmetric matrix in $\GL(n+1,\R)$ with $Q(v)=v^t S_Q v$).

Let $\{x_0,x_1,\dots,x_n\}$ be an orthogonal basis for $Q$ with $Q(x_0)<0$ and $Q(x_i)>0$ for each $i\geq 1$. The vectors $v\in\R^{n,1}$ with $Q(v)=1$ form an $n-$dimensional hyperboloid $\mathcal{C}$ consisting of a positive sheet $\mathcal{C}^+=\{v\in \mathcal{C} | v_0>0\}$ and a negative sheet $\mathcal{C}^-=\{v\in \mathcal{C} | v_0<0\}$. The \emph{hyperboloid model} of hyperbolic space $\H^n$ is identified with $S^+$. The isometires of $\H^n$ are the orthogonal transformations of $\R^{n,1}$.

Then $\mathrm{O}(Q;\R)$ is the isometry group which preserves $\mathcal{C}$. The index two subgroup which preserves the positive sheet $\mathcal{C}^+$ is called $\mathrm{O}^+(Q;\R)$ (i.e. preserves the sign of $v_0$). This subgroup $\mathrm{O}^+(Q;\R)$ is identified with the full group of isometries of $\H^n$ and has two connected components, whose elements either preserve or reverse orientation. The orientation preserving isometry group of $\H^n$ is identified with the component of matrices with determinant 1, $\SO^+(Q;\R)$.

The subgroup $\mathrm{O}^+(Q,\mathcal{O}_K)=\mathrm{O}^+(Q,\R)\cap \GL(n+1,\mathcal{O}_K)$ (or $\SO^+(Q,\mathcal{O}_K)=\SO^+(Q,\R)\cap \GL(n+1,\mathcal{O}_K)$) of integral isometries is a lattice, discrete subgroup of finite covolume, in $\mathrm{O}(Q;\R)$ (or $\SO(Q;\R)$). Any lattice commensurable with such an $\mathrm{O}(Q;\mathcal{O}_K)$ or $\SO(Q;\mathcal{O}_K)$ is called an \emph{arithmetic lattice of simplest type}. %The field $k$ is called the \emph{field of definition} and is a commensurability invariant.

%%%%%%%%%%          %%%%%%%%%%          %%%%%%%%%%
\subsection{Congruence subgroups}
If $\mathscr{I}$ is an ideal in a ring $R$ and $\Delta\leq\SL(n,R)$, the principal congruence subgroup of level $(\mathscr{I})$ in $\Delta$ is the subgroup of matrices $M\in\Delta$ which can be written as $M=I+\mathscr{I}A$ where $A\in \GL(n,R)$. We denote this subgroup by $\Delta_{(\mathscr{I})}$.

%%%%%%%%%%          %%%%%%%%%%          %%%%%%%%%%
\subsection{Right-angled hyperbolic polytope}
\label{subsec: the RACG}
A \emph{polytope} in $n$-dimensional hyperbolic space $\H^n$ is a convex polyhedron with finitely many actual and ideal vertices which is the convex hull of its vertices. A polytope is a \emph{Coxeter polytope} if the dihedral angle between any two adjacent sides is either $0$ or $\pi/k$ for some integer $k\geq 2$. A Coxeter polytope is \emph{right-angled} if all the dihedral angles are either $0$ or $\pi/2$.

The group generated by the reflections in the sides of a Coxeter polytope is a hyperbolic Coxeter group and is a discrete subgroup of $\Isom(\H^n)$. The group generated by reflections in the sides of a right-angled polytope is also right-angled Coxeter group, denoted \emph{RACG}.

Let $F_n$ be the diagonal quadratic form 
\begin{equation} F_n:=-x_0^2+ x_1^2 + \dots + x_n^2. 
\end{equation}

\begin{theorem}[{\cite[Theorem 2.1]{ERT}}]
For $2\leq n \leq 7$, $\mathrm{O}^+(F_n;\Z)_{(2)}$ is a RACG. It is the reflection group of an all-right hyperbolic polytope of dimension $n$, and $\SO^+(F_n;\Z)_{(2)}$ is its index 2 subgroup of orientation preserving isometries.
%\\ Furthermore, for $3\leq n\leq 7$, each side (codimension 1 face) is congruent to the polyhedron corresponding to $\mathrm{O}^+(F_{n-1};\Z)_{(2)}$.
\label{th: RACG}
\end{theorem}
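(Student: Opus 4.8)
The plan is to pin down which reflections of $\mathrm{O}^+(F_n;\Z)$ survive reduction modulo $2$, to observe that those reflections automatically generate a right-angled group, and then to run Vinberg's algorithm to show that for $2\le n\le 7$ they generate all of $\mathrm{O}^+(F_n;\Z)_{(2)}$ and cut out a finite-volume all-right polytope. A reflection in $\mathrm{O}^+(F_n;\Z)$ has the form $r_v(x)=x-\tfrac{2(x,v)}{(v,v)}v$ for a primitive integral vector $v$ with $(v,v)>0$, and since the bilinear form of $F_n$ has unimodular Gram matrix, integrality of $r_v$ forces $(v,v)\in\{1,2\}$. If $(v,v)=1$ then $r_v(x)=x-2(x,v)v\equiv x\pmod 2$, so every norm-$1$ reflection lies in $\mathrm{O}^+(F_n;\Z)_{(2)}$; if $(v,v)=2$ then $r_v(x)=x-(x,v)v$ and, since $v$ is primitive, there is an integral $x$ with $(x,v)$ odd, whence $r_v\not\equiv I\pmod 2$. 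Let $R_n\le\mathrm{O}^+(F_n;\Z)_{(2)}$ be the subgroup generated by all norm-$1$ reflections; since any element of $\mathrm{O}^+(F_n;\Z)$ carries a primitive norm-$1$ vector to a primitive norm-$1$ vector, $R_n$ is normal in $\mathrm{O}^+(F_n;\Z)$.

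Next I would check that $R_n$ is right-angled, independently of the covolume question. Being a subgroup of the discrete group $\mathrm{O}^+(F_n;\Z)_{(2)}$, the group $R_n$ is discrete, so by Vinberg's theory it is a Coxeter group whose Coxeter generators are the reflections in the walls of a locally finite fundamental chamber. For two primitive norm-$1$ vectors $v,w$ the bracket $(v,w)$ is an integer: if $(v,w)=0$ the mirrors $v^\perp,w^\perp$ are orthogonal, if $|(v,w)|=1$ they are asymptotically parallel, and if $|(v,w)|>1$ they are ultraparallel, so $r_vr_w$ has finite order only when $(v,w)=0$, in which case that order is $2$. Hence any two wall reflections of the chamber of $R_n$ either commute or generate an infinite group, i.e. $R_n$ is a RACG with an all-right fundamental chamber. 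The orientation statement follows at once: $\det$ maps $\mathrm{O}^+(F_n;\Z)_{(2)}$ onto $\{\pm1\}$ (it contains $r_{e_1}$), so $\SO^+(F_n;\Z)_{(2)}$ is its index-$2$ orientation-preserving kernel, generated by the products of pairs of wall reflections.

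The substance is to prove that $R_n$ equals $\mathrm{O}^+(F_n;\Z)_{(2)}$ and that its chamber has finite volume for $2\le n\le 7$. The plan is to run Vinberg's algorithm restricted to norm-$1$ roots: fix a basepoint $x_0\in\H^n$, take the finitely many norm-$1$ mirrors through $x_0$ as initial walls, and then adjoin further norm-$1$ mirrors in order of increasing distance from $x_0$, discarding each one that forms an obtuse dihedral angle with an already-selected wall. For each $n$ in the range one verifies, by a finite computation, that the process terminates with finitely many facets bounding a finite-volume polytope $P_n$, which is then all-right by the previous paragraph; consequently $R_n$ is the reflection group of $P_n$, has finite covolume, and hence finite index in $\mathrm{O}^+(F_n;\Z)$. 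Since $R_n\trianglelefteq\mathrm{O}^+(F_n;\Z)$ and $R_n\le\mathrm{O}^+(F_n;\Z)_{(2)}$, equality follows once one checks that $[\mathrm{O}^+(F_n;\Z):R_n]$ and $[\mathrm{O}^+(F_n;\Z):\mathrm{O}^+(F_n;\Z)_{(2)}]$ coincide: the former is the order of the symmetry group of the Coxeter polytope $P_n$ realized inside $\mathrm{O}^+(F_n;\Z)$ (equivalently the ratio of covolumes of the known fundamental simplex of $\mathrm{O}^+(F_n;\Z)$ and of $P_n$, both computable from Coxeter data), and the latter is the order of the image of reduction mod $2$ inside $\mathrm{O}(F_n;\F_2)$; one matches them case by case.

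The main obstacle is precisely this last step: establishing that Vinberg's algorithm on the norm-$1$ roots terminates for every $n$ with $2\le n\le 7$, that the resulting polytope $P_n$ has finite volume, and that the index comparison forces $R_n$ to be the whole level-$2$ congruence subgroup rather than a proper subgroup of it. I expect $n=7$ to be the hardest case, since it lies closest to the range in which $\mathrm{O}^+(F_n;\Z)$ is no longer a reflection group; by contrast the determination of the generating reflections, the right-angledness, and the orientation statement are short and formal.
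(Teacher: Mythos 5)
The first thing to say is that the paper contains no proof of this statement: it is imported verbatim as \cite[Theorem 2.1]{ERT}, so there is no internal argument to measure yours against, only the cited source. On its own terms, your preliminary analysis is correct: by unimodularity an integral reflection has a primitive root of norm $1$ or $2$, the norm-$1$ reflections lie in the level-$2$ kernel while the norm-$2$ ones do not, the subgroup $R_n$ generated by norm-$1$ reflections is normal in $\mathrm{O}^+(F_n;\Z)$, and integrality of $(v,w)$ for unit roots forces any two intersecting mirrors to meet at $\pi/2$, so a fundamental chamber for $R_n$ is automatically all-right. One step you pass over silently: to conclude the chamber is all-right you need that every wall reflection of $R_n$ has a norm-$1$ root; this is true, because a reflection lying in $R_n\leq\mathrm{O}^+(F_n;\Z)_{(2)}$ is integral, hence has a primitive root of norm $1$ or $2$, and your own mod-$2$ computation excludes norm $2$ --- but it should be said.

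The genuine gap is that the content of the theorem is exactly the part you defer to ``a finite computation'': that Vinberg's algorithm restricted to the norm-$1$ roots terminates for each $2\le n\le 7$ with a finite-volume chamber (hence a genuine polytope with finitely many facets), and that $R_n$ is all of $\mathrm{O}^+(F_n;\Z)_{(2)}$ rather than a proper, a priori infinite-index, subgroup. As written, none of this is executed: no facet lists, no termination check, no covolume or chamber-symmetry count, and the order of the mod-$2$ image of $\mathrm{O}^+(F_n;\Z)$ inside the orthogonal group over $\F_2$ is named but not determined. Tellingly, the hypothesis $n\le 7$ plays no role anywhere in what you actually prove. If you want to avoid matching two indices, there is a cleaner finish once finite volume is known: since $R_n\trianglelefteq\Gamma:=\mathrm{O}^+(F_n;\Z)$, one has $\Gamma_{(2)}=R_n\rtimes(S\cap\Gamma_{(2)})$ with $S$ the finite stabilizer of the chamber; a nontrivial element of $S\cap\Gamma_{(2)}$ is an involution congruent to $I$ mod $2$, its $(-1)$-eigenlattice is then an orthogonal unimodular positive-definite summand of the lattice, and in rank at most $7$ such a lattice is $\Z^k$, so the involution is a product of norm-$1$ reflections and already lies in $R_n$, forcing $S\cap\Gamma_{(2)}=1$. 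That argument also makes visible where $n\le 7$ (the absence of $E_8$) enters. But the termination/finite-volume verification remains the irreducible core, and it is precisely what the cited paper \cite{ERT} supplies and what your proposal leaves undone.
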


%%%%%%%%%%          %%%%%%%%%%          %%%%%%%%%%
\section{Bianchi groups and quadratic forms}\label{sec: Bianchi quad form}
The Bianchi groups are the arithmetic Kleinian groups $\PSL(2,\mathcal{O}_m)$ where $m$ is a positive square-free integer and 
\begin{equation}
\mathcal{O}_m = \begin{cases}\Z[\sqrt{-m}] & m\equiv1,2\mod(4)\\ \Z[\frac{1+\sqrt{-m}}{2}] & m\equiv3\mod(4)\end{cases}
\end{equation}
is the ring of integers in the field $\Q(\sqrt{-m})$. Any non-cocompact arithmetic lattice in $\Isom^+(\H^3)$ is commensurable to some Bianchi group (see e.g. \cite[\S8.2]{MR_book}).

%%%%%%%%%%          %%%%%%%%%%          %%%%%%%%%%
\subsection{The Bianchi groups and quadratic forms} \label{subsec: bianchi quad form}
We now give a precise relationship between Bianchi groups and orthogonal groups of quadratic forms following \cite[\S 2]{JM} and \cite[\S 1.3]{EGM}.

For $m$ square-free integer, define the quadratic forms
\begin{equation}
Q_m(x_0,x_1,x_2,x_3) :=
\begin{cases}
 2x_0 x_1 + 2x_2^2 + 2m x_3^2 & \text{ if }m\equiv 1,2 \mod 4 \\
 2x_0 x_1 + 2x_2^2 + 2x_2 x_3 + \frac{m+1}{2} x_3^2 & \text{ if }m\equiv 3 \mod 4
\end{cases}
\end{equation}
with corresponding symmetric matrices
\begin{equation}\label{eq: mat S_m}
S_{m_{1,2}}=
\begin{pmatrix}
0 & 1 & 0 & 0\\
1 & 0 & 0 & 0\\
0 & 0 & 2 & 0 \\
0 & 0 & 0 & 2m
\end{pmatrix} 
\text{ and }
S_{m_{3}}=
\begin{pmatrix}
0 & 1 & 0 & 0\\
1 & 0 & 0 & 0\\
0 & 0 & 2 & 1 \\
0 & 0 & 1 & \frac{m+1}{2}
\end{pmatrix} 
.
\end{equation}

For $m\equiv 1,2\mod4$, define the homomorphism 
\begin{equation*}
\varphi_m:\PGL(2,\mathcal{O}_m)\rightarrow \SO(Q_m;\Z)
\end{equation*}
by sending
$\alpha=\begin{pmatrix} 
a_0+a_1\sqrt{-m}&b_0+b_1\sqrt{-m}
\\ c_0+c_1\sqrt{-m}&d_0+d_1\sqrt{-m} \end{pmatrix}$
to $(\det(\alpha))^{-1}$ times 
\small{\begin{multline}\label{eq: varphi 1,2}
\left(\begin{matrix}  d_0^2+m d_1^2 & -c_0^2-m c_1^2 & 2 \left(c_0 d_0+m c_1 d_1\right)
\\ -b_0^2-m b_1^2 & a_0^2+m a_1^2 & -2 \left(a_0 b_0+m a_1 b_1\right) 
\\ b_0 d_0+m b_1 d_1 & -a_0 c_0-m a_1 c_1 & b_0 c_0+m b_1 c_1+a_0 d_0+m a_1 d_1 
\\ b_1 d_0-b_0 d_1 & a_0 c_1-a_1 c_0 & b_1 c_0-b_0 c_1+a_1 d_0-a_0 d_1 \end{matrix}\right.
\\ \left.
\begin{matrix}
-2 m \left(c_1 d_0-c_0 d_1\right)
\\ 2 m \left(a_1 b_0-a_0 b_1\right)
\\ m \left(b_1 c_0-b_0 c_1-a_1 d_0+a_0 d_1\right)
\\  -b_0 c_0-m b_1 c_1+a_0 d_0+m a_1 d_1
\end{matrix}
\right).
\end{multline}}\normalsize
The kernel of this homomorphism is $\Q^\times I$, so it restricts to an injection
$$\PSL(2,\mathcal{O}_m)\hookrightarrow\SO^+(Q_m;\Z)$$
whose image is the spinorial group \cite{JM}.

For $m\equiv 3\mod4$, let $m=4k-1$ and define the homomorphism 
\begin{equation*}
\varphi_m:\PGL(2,\mathcal{O}_m)\rightarrow \SO(Q_m;\Z)
\end{equation*}
by sending
$\alpha=\begin{pmatrix} 
a_0+a_1\frac{1+\sqrt{-m}}{2}&b_0+b_1\frac{1+\sqrt{-m}}{2}
\\ c_0+c_1\frac{1+\sqrt{-m}}{2}&d_0+d_1\frac{1+\sqrt{-m}}{2} \end{pmatrix}$
to $(\det(\alpha))^{-1}$ times 
\small{\begin{multline}\label{eq: varphi 3}
\left(\begin{matrix}  d_0^2+d_1 d_0+k d_1^2 & -c_0^2-c_1 c_0-k c_1^2 & 2 c_0 d_0+c_1 d_0+c_0 d_1+2 k c_1 d_1 \\
 -b_0^2-b_1 b_0-k b_1^2 & a_0^2+a_1 a_0+k a_1^2 & -2 a_0 b_0-a_1 b_0-a_0 b_1-2 k a_1 b_1 \\
 b_0 d_0+b_1 d_0+k b_1 d_1 & -a_0 c_0-a_1 c_0-k a_1 c_1 &
\begin{smallmatrix}b_0 c_0+b_1 c_0+k b_1 c_1+a_0 d_0+a_1 d_0+k a_1 d_1\end{smallmatrix} \\
 b_0 d_1-b_1 d_0 & a_1 c_0-a_0 c_1 &-b_1 c_0+b_0 c_1-a_1 d_0+a_0 d_1 
\end{matrix}\right.
\\ \left.
\begin{matrix}
 c_0 d_0+2 k c_1 d_0-2 k c_0 d_1+c_0 d_1+k c_1 d_1
\\  -a_0 b_0-2 k a_1 b_0+2 k a_0 b_1-a_0 b_1-k a_1 b_1
\\  b_0 c_0-k b_1 c_0+b_1 c_0+k b_0 c_1+k b_1 c_1+k a_1d_0-k a_0 d_1
\\  -b_0 c_0-b_1 c_0-k b_1 c_1+a_0 d_0+a_0 d_1+k a_1 d_1
\end{matrix}
\right) .
\end{multline}}\normalsize
The kernel of this homomorphism is $\Q^\times I$, so it restricts to an injection $$\PSL(2,\mathcal{O}_m)\hookrightarrow\SO^+(Q_m;\Z).$$

%%%%%%%%%%          %%%%%%%%%%          %%%%%%%%%%
\subsection{Congruence subgroups of Bianchi groups}\label{subsec: cong Bianchi}
If $\mathscr{I}$ is an ideal in $\mathcal{O}_m$, the quotient $\PSL(2,\mathcal{O}_m)/\PSL(2,\mathcal{O}_m)(\mathscr{I})$ of $\PSL(2,\mathcal{O}_m)$ by the principal congruence subgroup of level $\mathscr{I}$ is isomorphic to $\PSL(2,\mathcal{O}_m/\mathscr{I})$. The \emph{norm} of $\mathscr{I}$ is given by $N(\mathscr{I})=|\mathcal{O}_m/\mathscr{I}|$ and is multiplicative $N(\mathscr{IJ})=N(\mathscr{I})N(\mathscr{J})$.

This gives a formula for the index $\PSL(2,\mathcal{O}_m)(\mathscr{I})$ using a decomposition of $\mathscr{I}$ into powers of prime ideals $\mathscr{I}=\mathcal{P}_1^{j_1}\cdots\mathcal{P}_r^{j_r}$ (see \cite{Dickson}). We have
\begin{equation*}
[\PSL(2,\mathcal{O}_m):\PSL(2,\mathcal{O}_m)(\mathscr{I})]=\begin{cases}
6 & \text{when }N(\mathscr{I})=2\\
N(\mathscr{I})^3 \prod_{\mathcal{P}|\mathscr{I}} \left(1-\frac{1}{N(\mathscr{I})^2}\right) & \text{when } 2\in \mathscr{I}\\
\frac{N(\mathscr{I})^3}{2} \prod_{\mathcal{P}|\mathscr{I}} \left(1-\frac{1}{N(\mathscr{I})^2}\right) & \text{otherwise }.
\end{cases}
\end{equation*}

For a rational prime $p$ there are three possibilities for the decomposition of the ideal $p\mathcal{O}_m$ in $\mathcal{O}_m$:
\begin{equation*}
p\mathcal{O}_m=\begin{cases}
\mathcal{P}^2 & p \text{ is ramified and } N(\mathcal{P})=p \\
\mathcal{P} & p \text{ is inert and } N(\mathcal{P})=p^2 \\
\mathcal{P}_1\mathcal{P}_2 & p \text{ is split and } N(\mathcal{P}_1)=N(\mathcal{P}_2)=p .
\end{cases}
\end{equation*}

Therefore since 2 is ramified when $m\equiv1,2\mod4$, is inert when $m\equiv3\mod4$, and is split when $m\equiv7\mod4$ we have
\begin{equation}\label{eq: level 2 index}
[\PSL(2,\mathcal{O}_m):\PSL(2,\mathcal{O}_m)_{(2)}]=\begin{cases}
48 & \text{if }m\equiv1,2\mod4\\
60 & \text{if }m\equiv3\mod8\\
36 & \text{if }m\equiv7\mod8 .
\end{cases}
\end{equation}

%%%%%%%%%%          %%%%%%%%%%          %%%%%%%%%%
\section{Proof of \autoref{thm: main}}\label{sec: pf of main}

%%%%%%%%%%          %%%%%%%%%%          %%%%%%%%%%
\subsection{The special subgroup for \texorpdfstring{$m\equiv 1,2\mod 4$}{TEXT}}

By \autoref{eq: level 2 index} when $m\equiv 1,2\mod 4$ we have
\begin{equation}
\left[\PSL(2,\mathcal{O}_m) : \PSL(2,\mathcal{O}_m)_{(2)} \right] = 48.
\end{equation}

\begin{prop}\label{prop: only need level 2 1,2} Let $m\equiv 1,2\mod4$ be square-free positive. The principal congruence subgroup $\PSL(2,\Z[\sqrt{-m}])_{(2)}$ embeds in the RACG $\SO^+(F;\Z)_{(2)}$.
\end{prop}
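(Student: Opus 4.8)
The plan is to exhibit an explicit equivalence of quadratic forms over $\Q$ between $Q_m$ and a form built from the standard diagonal form $F = F_6 = -x_0^2 + x_1^2 + \dots + x_6^2$ (or rather an appropriate rescaling/sub-form thereof), in such a way that the image of $\PSL(2,\mathcal{O}_m)_{(2)}$ under $\varphi_m$ lands inside $\SO^+(F;\Z)_{(2)}$. Concretely, I would first use the diagonalization $Q_m \sim \langle -1, 1, 2, 2m\rangle$ (the hyperbolic plane $2x_0x_1$ splits as $\langle 1, -1\rangle$ after the change of variables $x_0 \mapsto x_0+x_1$, $x_1\mapsto x_0-x_1$, up to a factor of $2$), and then realize the positive-definite binary pieces $\langle 2, 2m\rangle$ as sub-forms of a sum of squares of rank at most $4$ over $\Z$: every positive integer is a sum of four squares, so $2 = 1+1$ and $2m = a^2+b^2+c^2+d^2$, which embeds $\langle -1,1,2,2m\rangle$ isometrically into $F_6$ (using one negative coordinate, two for the $2$, and at most four for $2m$ — one checks the ranks fit into $6+1$ after a small optimization, e.g.\ writing $2m$ as a sum of three squares when $m\not\equiv 7 \bmod 8$, which is exactly the congruence case at hand since here $m\equiv 1,2\bmod 4$). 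This realizes $\H^3/\PSL(2,\mathcal{O}_m)$ as a totally geodesic submanifold of $\H^6/\mathrm{O}^+(F;\Z)$.

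The key technical step is to track the level-$2$ congruence condition through this change of basis. I would choose the change-of-basis matrix $A$ with $A^t S_{Q_m} A = S_F$ (over $\Q$) and then verify that conjugation $M \mapsto A^{-1} M A$ carries $\varphi_m\big(\PSL(2,\mathcal{O}_m)_{(2)}\big)$ into $\mathrm{O}^+(F;\Z)_{(2)} = \{M \in \mathrm{O}^+(F;\Z) : M \equiv I \bmod 2\}$. Since $\varphi_m$ is given by the explicit formula in \eqref{eq: varphi 1,2}, and a matrix in $\PSL(2,\mathcal{O}_m)_{(2)}$ has the form $I + 2B$ with $B$ integral (in the $\mathcal{O}_m$-entries, so $a_1, b_1, c_1, d_1$ even and $a_0 \equiv d_0 \equiv 1$, $b_0 \equiv c_0 \equiv 0 \bmod 2$), one substitutes into \eqref{eq: varphi 1,2} and checks that $\varphi_m(\alpha) \equiv I \bmod 2$; the determinant factor $\det(\alpha)^{-1} \equiv 1 \bmod 2$ causes no trouble. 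Then one checks that the $\Q$-change of basis $A$ can be taken with denominators coprime to $2$ (or more carefully, that the reduction mod $2$ of $A^{-1}(I + 2\,(\text{integral}))A$ is still $I$), so the congruence condition is preserved. This requires being slightly careful about $2$-adic integrality of $A$, which is the crux.

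The main obstacle I anticipate is precisely this $2$-adic bookkeeping: the naive change of basis splitting the hyperbolic plane $2x_0x_1$ introduces a factor of $2$, and the sum-of-squares decomposition of $2m$ need not be $2$-adically unimodular, so one must either choose the decomposition carefully or argue that $\SO^+(F;\Z)_{(2)}$ is insensitive to the specific integral model within its genus at $2$. I would handle this by working with a fixed, concrete matrix $A$ for each residue class of $m$ and simply computing the reduction mod $2$; since $\PSL(2,\mathcal{O}_m)_{(2)}$ is generated by a manageable set of elementary-type matrices (or one can argue on the level of the full mod-$2$ reduction map $\SO^+(Q_m;\Z) \to \SO(Q_m; \Z/2)$ compared with $\SO^+(F;\Z) \to \SO(F;\Z/2)$ under the isomorphism induced by $A$), the verification reduces to a finite check. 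Once the containment $A^{-1}\varphi_m(\PSL(2,\mathcal{O}_m)_{(2)})A \subseteq \mathrm{O}^+(F;\Z)_{(2)}$ is established, \autoref{th: RACG} identifies the target as a RACG and the proposition follows; the index $48$ is already recorded in \eqref{eq: level 2 index}.
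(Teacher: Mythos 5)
Your overall strategy coincides with the paper's: realize $Q_m$ (up to scaling) rationally as a subform of $F_6$, extend isometries of the small form by the identity on a complement, and then use the level-$2$ condition to force the conjugated matrices into $\SO^+(F_6;\Z)_{(2)}$, finishing with \autoref{th: RACG}. However, your specific construction has a genuine gap at the very first step. You allot one negative coordinate, one coordinate for $\langle 1\rangle$, two for $\langle 2\rangle=\langle 1,1\rangle$, and the remaining three for $\langle 2m\rangle$, justifying a three-square representation of $2m$ by the congruence class of $m$. But the three-square theorem must be applied to $2m$, not to $m$: if $m\equiv 14 \pmod{16}$ (e.g.\ $m=14$, which is square-free and $\equiv 2 \bmod 4$), then $2m=4(8b+7)$ is \emph{not} a sum of three squares, and the four-square fallback needs $1+1+2+4=8$ coordinates, which do not fit in $F_6$. (An embedding of the quaternary lattice still exists if one lets the norm-$2m$ vector overlap the coordinates used for $\langle 2\rangle$, e.g.\ $28=2\cdot 1^2+5^2+1^2$, but that is outside your stated recipe and would have to be set up and verified separately.) The paper avoids this issue with a different device: it extends $Q_m$ by $\langle 2m,2m,2m\rangle$ to a rank-$7$ form $P_m$, writes $m$ (not $2m$) as a sum of four squares — always possible by Lagrange — and uses the quaternionic $4\times4$ block in \autoref{eq: mat Am 1,2} to identify $\tfrac12 P_m$ with $F_6$ over $\Q$ uniformly in $m$.

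The second gap is that the step you yourself call the crux — integrality and the mod-$2$ congruence of $A^{-1}\varphi_m(N)A$ (or $A\varphi_m(N)A^{-1}$) for $N\in\PSL(2,\mathcal{O}_m)_{(2)}$ — is only promised, not performed, and the plan for it does not work as described: your $A$ depends on the chosen sum-of-squares decomposition of $2m$, hence on $m$ itself rather than on its residue class, so there is no ``fixed concrete matrix $A$ for each residue class''; and a generator-by-generator check would require a manageable generating set of $\PSL(2,\mathcal{O}_m)_{(2)}$ uniform in $m$, which you do not supply. Note that integrality of the conjugate is not automatic (both $A$ and $A^{-1}$ have denominators, at $2$ and at primes dividing $m$), and it is precisely the level-$2$ hypothesis that must rescue it. The paper's proof consists essentially of this verification, done symbolically for a general level-$2$ element: it computes the image \autoref{eq: image other lv 2 at 1,2} under $\varphi_m$, pads by the identity, and checks directly (symbolically in $a_i,b_i,c_i,d_i,w,x,y,z,m$) that $A_m N' A_m^{-1}$ is integral and $\equiv I \bmod 2$. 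Without the analogous computation for your $A$, the proposition is not established.
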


\begin{proof}
Let $P_m:= Q_m \oplus \langle 2m,2m,2m \rangle$. The group $\SO^+(Q_m;\Z)$ is naturally a subgroup of $\SO^+(P_m;\Z)$.

By Lagrange's 4-square theorem, write $m= w^2 + x^2 + y^2 + z^2$, a sum of four squares.
Consider the $7\times 7$ matrices
\begin{equation}
A_{m} = 
\begin{pmatrix}
\frac{1}{2} & -\frac{1}{2} & 0 & 0 & 0 & 0 & 0 \\
\frac{1}{2} & \frac{1}{2} & 0 & 0 & 0 & 0 & 0 \\
0 & 0 & 1 & 0 & 0 & 0 & 0 \\
0 & 0 & 0 & w & -x & -y & -z \\
0 & 0 & 0 & x & w & z & -y \\
0 & 0 & 0 & y & -z & w & x \\
0 & 0 & 0 & z & y & -x & w \\
\end{pmatrix}
\label{eq: mat Am 1,2}\end{equation}
with inverse
\begin{equation}
A_{m}^{-1} = 
\begin{pmatrix}
1&1&0&0&0&0&0 \\
-1&1&0&0&0&0&0 \\
0&0&1&0&0&0&0 \\
0&0&0& \frac{w}{m} & \frac{x}{m} & \frac{y}{m} & \frac{z}{m} \\
0&0&0& -\frac{x}{m} & \frac{w}{m} & -\frac{z}{m} & \frac{y}{m} \\
0&0&0& -\frac{y}{m} & \frac{z}{m} & \frac{w}{m} & -\frac{x}{m} \\
0&0&0& -\frac{z}{m} & -\frac{y}{m} & \frac{x}{m} & \frac{w}{m} \\
\end{pmatrix}  .
\label{eq: mat AmI 1,2}\end{equation}

Let $S_F$ be the diagonal matrix associated to $F_6$ and $S_{P_m}$ the symmetric matrix associated to $\frac{1}{2}P_m$:
\begin{equation}
S_F = 
\begin{pmatrix}
-1 & 0 & 0 & 0 & 0 & 0 & 0 \\
0 & 1 & 0 & 0 & 0 & 0 & 0 \\
0 & 0 & 1 & 0 & 0 & 0 & 0 \\
0 & 0 & 0 & 1 & 0 & 0 & 0 \\
0 & 0 & 0 & 0 & 1 & 0 & 0 \\
0 & 0 & 0 & 0 & 0 & 1 & 0 \\
0 & 0 & 0 & 0 & 0 & 0 & 1 \\
\end{pmatrix}
\label{eq: mats F}\end{equation}
\begin{equation}
S_{P_m} = 
\begin{pmatrix}
0 & \frac{1}{2} & 0 & 0 & 0 & 0 & 0 \\
\frac{1}{2} & 0 & 0 & 0 & 0 & 0 & 0 \\
0 & 0 & 1 & 0 & 0 & 0 & 0 \\
0 & 0 & 0 & m & 0 & 0 & 0 \\
0 & 0 & 0 & 0 & m & 0 & 0 \\
0 & 0 & 0 & 0 & 0 & m & 0 \\
0 & 0 & 0 & 0 & 0 & 0 & m \\
\end{pmatrix}  .
\end{equation}
Then $A_{m}^t S_F A_{m}=S_{P_m}$. Since $A_{m}$ has determinant $-m^2/2$, it is in $\GL(7,\Q)$ and the forms $F_6$ and $P_m$ are equivalent over $\Q$ and thus $A_{m} \SO^+(P_m;\Q) A_{m}^{-1}=\SO^+(F_6;\Q)$. Therefore, $A_{m} \SO^+(Q_m;\Q) A_{m}^{-1}\subset \SO^+(F_6;\Q)$.

A matrix $N$ in $\PSL(2,\mathcal{O}_m)_{(2)}$ has form
\begin{equation*}
\begin{pmatrix} 2a_0+1+2a_1\sqrt{-m} & 2b_0+2b_1\sqrt{-m}\\ 2c_0+2c_1\sqrt{-m}& 2d_0+1+2d_1\sqrt{-m} \end{pmatrix} ,
\end{equation*}
and from \autoref{eq: varphi 1,2}, its image in $\varphi_m$ is given by
\small{\begin{multline}\label{eq: image other lv 2 at 1,2}
\left(\begin{matrix} 
4\left(d_0^2+d_0+md_1^2\right)+1 &-4\left(c_0^2+m c_1^2\right)
\\ -4\left(b_0^2+m b_1^2\right) & 4\left(a_0^2+a_0+m a_1^2\right)+1
\\ 2\left(2 d_0 b_0+b_0+2 m b_1 d_1\right) &-2\left(2a_0 c_0+c_0+2 m a_1 c_1\right)
\\ 2\left(2 d_0 b_1+b_1-2 b_0 d_1\right) & -2 \left(2 a_1 c_0-2 a_0 c_1-c_1\right)
\end{matrix} \right.
\\
\begin{matrix}
4 \left(2 d_0 c_0+c_0+2 m c_1 d_1\right) 
\\ -4 \left(2 a_0 b_0+b_0+2 m a_1 b_1\right) 
\\ 2 \left(2 d_0 a_0+a_0+2 b_0 c_0+2 m b_1 c_1+d_0+2 m a_1 d_1\right)+1 
\\ 2 \left(2 d_0 a_1+a_1+2 b_1 c_0-2 b_0 c_1-2 a_0 d_1-d_1\right) 
\end{matrix}
\\ \left.
\begin{matrix}
-4 m \left(2 d_0 c_1+c_1-2 c_0 d_1\right)
\\ 4 m \left(2 a_1 b_0-2 a_0 b_1-b_1\right) 
\\ 2m\left(-2 d_0 a_1-a_1+2 b_1 c_0-2 b_0 c_1+2 a_0 d_1+d_1\right)
\\ 1-2 \left(-2 d_0a_0-a_0+2 b_0 c_0+2 m b_1 c_1-d_0-2 m a_1 d_1\right) 
\end{matrix}
\right).
\end{multline}}\normalsize

Let $N'$ be
\begin{equation*}
\begin{pmatrix}
\varphi_m(N)& 0_{4\times3} \\
0_{3\times4} & I_{3\times3} \end{pmatrix} \in\SO^+(P_m;\Z) .
\end{equation*}
It is now an easy check to see that $A_{m}N'A_{m}^{-1}\in \SO^+(F_6;\Q)$ and in fact $A_{m}N'A_{m}^{-1}\equiv I\mod(2)$. So $A_{m}N'A_{m}^{-1}\in \SO^+(F;\Z)_{(2)}$.
\end{proof}

\begin{remark}\label{rmk: between level 4 and 2}
From \autoref{eq: image other lv 2 at 1,2} we see that 
\begin{equation*}
\varphi_m\left(\PSL(2,\mathcal{O}_m)\right)_{(4)}\leq \varphi_m\left(\PSL(2,\mathcal{O}_m)_{(2)}\right) \leq \varphi_m\left(\PSL(2,\mathcal{O}_m)\right)_{(2)} .
\end{equation*}
\end{remark}

\begin{remark}
In the ring of integers $\mathcal{O}_m$ for the number field $\Q(\sqrt{-m})$, $(2)$ is not always a prime ideal. One might ask whether a congruence subgroup with level a prime over 2 suffices to embed in a RACG. Unfortunately, using the methods above, a prime over 2 is not enough. Consider as an example the case of $m=1$. The ideal $(2)$ ramifies as $(1+i)^2$. However, the principal congruence subgroup $\PSL(2,\Z[i])_{(1+i)}$ is not contained in $\SO^+(F;\Z)_{(2)}$ via the map $\varphi_1$ and the conjugation by $A_{m_{1,2}}$. Indeed if we write $1=1^2+0+0+0$ (i.e. $w=1$, $x=y=z=0$ in the definition of $A_{m_{1,2}}$ in \autoref{eq: mat Am 1,2}), the element
\begin{equation*}
A_1^{-1}\varphi_1\left( \begin{pmatrix}
1&1+i \\ 0&1
\end{pmatrix} \right) A_1 
=
\begin{pmatrix}
2&1&1&1&0&0&0\\
-1&0&-1&-1&0&0&0\\
1&1&1&0&0&0&0\\
1&1&0&1&0&0&0\\
0&0&0&0&1&0&0\\
0&0&0&0&0&1&0\\
0&0&0&0&0&0&1
\end{pmatrix}
\end{equation*}
does not reduce to the identity modulo 2.
\end{remark}

%%%%%%%%%%          %%%%%%%%%%          %%%%%%%%%%
\subsection{The special subgroup for \texorpdfstring{$m\equiv 3\mod 4$}{TEXT}}

\begin{prop}\label{prop: only need level 2 3} Let $m\equiv 3\mod4$ be square-free positive. The principal congruence subgroup $\PSL(2,\Z[\sqrt{-m}])_{(2)}$ has an index 2 subgroup $\Delta_m$ which embeds in the RACG $\SO^+(F;\Z)_{(2)}$ and with
\begin{equation}
\left[\PSL(2,\mathcal{O}_m) : \Delta_m \right] = \begin{cases}
120 & \text{if }m\equiv3\mod8
\\ 72 & \text{if }m\equiv7\mod8 .
\end{cases}
\end{equation}
\end{prop}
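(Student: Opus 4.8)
The plan is to mimic the proof of Proposition~\ref{prop: only need level 2 1,2} but account for the fact that, for $m\equiv 3\bmod 4$, the form $Q_m$ has a diagonalization whose denominators (and 2-adic behavior) differ from the $m\equiv 1,2$ case, so that the naive level-$2$ congruence subgroup will not land in $\SO^+(F;\Z)_{(2)}$ on the nose; instead one obtains an index-$2$ overgroup issue that forces passing to a subgroup $\Delta_m$ of index $2$ in $\PSL(2,\mathcal O_m)_{(2)}$. Concretely, first I would set $P_m := Q_m\oplus\langle c,c,c\rangle$ for an appropriate positive integer $c$ (some multiple of the relevant discriminant, e.g.\ $c = 2m$ or $c=m$ depending on how $S_{m_3}$ diagonalizes), so that $\tfrac12 P_m$ becomes $\Q$-equivalent to $F_6$; here I must exhibit an explicit rational matrix $A_m\in\GL(7,\Q)$ with $A_m^t S_F A_m = S_{P_m}$, built from (i) the $2\times 2$ hyperbolic-plane change of basis $\left(\begin{smallmatrix} 1/2 & -1/2 \\ 1/2 & 1/2\end{smallmatrix}\right)$ as before, (ii) a $2\times 2$ block diagonalizing the anisotropic part $2x_2^2 + 2x_2x_3 + \tfrac{m+1}{2}x_3^2$ of $Q_m$ (which introduces a factor involving $m$ in the denominators), and (iii) a Lagrange four-square block on the $\langle c,c,c\rangle$ summand, writing the appropriate integer as $w^2+x^2+y^2+z^2$ and using the quaternionic $4\times 4$ orthogonal matrix from \eqref{eq: mat Am 1,2}. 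This gives $A_m\,\SO^+(Q_m;\Q)\,A_m^{-1}\subset \SO^+(F_6;\Q)$ exactly as in the previous proposition.

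Next I would take $N\in\PSL(2,\mathcal O_m)_{(2)}$, write it in the form $I + 2(\cdots)$ in the basis $\{1,\tfrac{1+\sqrt{-m}}{2}\}$, compute $\varphi_m(N)$ via \eqref{eq: varphi 3}, pad it to $N' = \left(\begin{smallmatrix}\varphi_m(N) & 0 \\ 0 & I_3\end{smallmatrix}\right)\in\SO^+(P_m;\Z)$, and then examine $A_m N' A_m^{-1}$ modulo $2$. The key computation is to determine exactly which entries of this conjugate fail to be $\equiv I\bmod 2$. Unlike the $m\equiv 1,2$ case, here the diagonalizing block for the $2x_2^2+2x_2x_3+\tfrac{m+1}{2}x_3^2$ part carries a factor of $2$ in a denominator (since completing the square on $2x_2^2+2x_2x_3$ produces a $\tfrac12$), so one of the off-diagonal entries of $A_m N' A_m^{-1}$ is an integer that is congruent mod $2$ to a linear form in the $a_i,b_i,c_i,d_i$ rather than to $0$. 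That linear form, reduced mod $2$, defines a surjective homomorphism $\PSL(2,\mathcal O_m)_{(2)} \to \Z/2$, and I would define $\Delta_m$ to be its kernel; by construction $A_m N' A_m^{-1}\equiv I\bmod 2$ for all $N\in\Delta_m$, so $A_m(\cdot)A_m^{-1}$ embeds $\Delta_m$ into $\SO^+(F;\Z)_{(2)}$, which is a RACG by Theorem~\ref{th: RACG}.

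Finally, for the index statement: since $\Delta_m$ has index $2$ in $\PSL(2,\mathcal O_m)_{(2)}$ (once one checks the mod-$2$ linear form is genuinely nontrivial on the congruence subgroup, e.g.\ by evaluating it on an explicit generator such as $\left(\begin{smallmatrix}1 & 2\\0&1\end{smallmatrix}\right)$ or $\left(\begin{smallmatrix}1+\sqrt{-m} & 0 \\ 0 & 1\end{smallmatrix}\right)^{\pm1}$-type elements actually lying in the level-$2$ subgroup), we multiply \eqref{eq: level 2 index} by $2$: $[\PSL(2,\mathcal O_m):\Delta_m] = 2\cdot 60 = 120$ when $m\equiv 3\bmod 8$ and $2\cdot 36 = 72$ when $m\equiv 7\bmod 8$. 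I would also remark (as in Remark~\ref{rmk: between level 4 and 2}) that $\Delta_m$ contains the level-$4$ principal congruence subgroup, which explains the ``level $4$'' phrasing in Theorem~\ref{thm: main}. The main obstacle I anticipate is identifying the precise mod-$2$ obstruction entry cleanly — i.e.\ pinning down the correct auxiliary integer $c$, the correct diagonalizing matrix, and then tracking the arithmetic of \eqref{eq: varphi 3} through the conjugation carefully enough to see that exactly one $\Z/2$ worth of obstruction survives (not more), so that the index is exactly $2$ and not $4$ or larger; this is a finite but delicate $7\times 7$ matrix computation over $\Z[\tfrac12]$.
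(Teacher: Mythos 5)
Your proposal follows essentially the same route as the paper's proof: take $P_m = Q_m \oplus \langle 2m,2m,2m\rangle$, conjugate by a rational matrix assembled from the hyperbolic-plane block, a completing-the-square block for the binary part of $Q_m$, and a Lagrange four-square quaternionic block, then identify the mod-$2$ obstruction to landing in $\SO^+(F;\Z)_{(2)}$ as a $\Z/2$-valued homomorphism on $\PSL(2,\mathcal{O}_m)_{(2)}$ whose kernel is $\Delta_m$, and double the indices from \eqref{eq: level 2 index}. The paper's computation (which uses the determinant-$1$ relation to get $a_1\equiv d_1\bmod 2$) pins that obstruction down to exactly the single $\Z/2$ you anticipated, namely $\beta = b_1+c_1 \bmod 2$.
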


\begin{proof}
Let $P_m:= Q_m \oplus \langle 2m,2m,2m \rangle$. The group $\SO^+(Q_m;\Z)$ is naturally a subgroup of $\SO^+(P_m;\Z)$.

By Lagrange's 4-square theorem, write $m= w^2 + x^2 + y^2 + z^2$, a sum of four squares.
Consider the $7\times 7$ matrices
\begin{equation}
A_{m_{3}} = 
\begin{pmatrix}
\frac{1}{2} & -\frac{1}{2} & 0 & 0 & 0 & 0 & 0 \\
\frac{1}{2} & \frac{1}{2} & 0 & 0 & 0 & 0 & 0 \\
0 & 0 & 1 & \frac{1}{2} & 0 & 0 & 0 \\
0 & 0 & 0 & -\frac{w}{2} & -x & -y & -z \\
0 & 0 & 0 & -\frac{x}{2} & w & z & -y \\
0 & 0 & 0 & -\frac{y}{2} & -z & w & x \\
0 & 0 & 0 & -\frac{z}{2} & y & -x & w \\
\end{pmatrix}
\label{eq: mat Am 3}\end{equation}
with inverse
\begin{equation}
A_{m_{3}}^{-1} = 
\begin{pmatrix}
1&1&0&0&0&0&0 \\
-1&1&0&0&0&0&0 \\
0&0&1&\frac{w}{m}&\frac{x}{m}&\frac{y}{m}&\frac{z}{m} \\
0&0&0&-\frac{2w}{m}&-\frac{2x}{m}&-\frac{2y}{m}&-\frac{2z}{m} \\
0&0&0&-\frac{x}{m}&\frac{w}{m}&-\frac{z}{m}&\frac{y}{m} \\
0&0&0&-\frac{y}{m}&\frac{z}{m}&\frac{w}{m}&-\frac{x}{m} \\
0&0&0&-\frac{z}{m}&-\frac{y}{m}&\frac{x}{m}&\frac{w}{m} \\
\end{pmatrix} .
\label{eq: mat AmI 3}\end{equation}

Let $S_F$ be the diagonal matrix associated to $F_6$ (see \autoref{eq: mats F}) and $S_{P_m}$ the symmetric matrix associated to $\frac{1}{2}P_m$:
\begin{equation}
\begin{pmatrix}
0 & \frac{1}{2} & 0 & 0 & 0 & 0 & 0 \\
\frac{1}{2} & 0 & 0 & 0 & 0 & 0 & 0 \\
0 & 0 & 1 & \frac{1}{2} & 0 & 0 & 0 \\
0 & 0 & \frac{1}{2} & \frac{m+1}{4} & 0 & 0 & 0 \\
0 & 0 & 0 & 0 & m & 0 & 0 \\
0 & 0 & 0 & 0 & 0 & m & 0 \\
0 & 0 & 0 & 0 & 0 & 0 & m \\
\end{pmatrix}.
\end{equation}
Then $A_{m}^t S_F A_{m}=S_{P_m}$. Since $A_{m}$ has determinant $-m^2/4$, it is in $\GL(7,\Q)$ and the forms $F_6$ and $P_m$ are equivalent over $\Q$ and thus $A_{m} \SO^+(P_m;\Q) A_{m}^{-1}=\SO^+(F_6;\Q)$. Therefore, $A_{m} \SO^+(Q_m;\Q) A_{m}^{-1}\subset \SO^+(F_6;\Q)$.

A matrix $N$ in $\PSL(2,\mathcal{O}_m)_{(2)}$ has form
\begin{equation*}
\begin{pmatrix} 2a_0+1+2a_1\frac{1+\sqrt{-m}}{2} & 2b_0+2b_1\frac{1+\sqrt{-m}}{2}\\ 2c_0+2c_1\frac{1+\sqrt{-m}}{2}& 2d_0+1+2d_1\frac{1+\sqrt{-m}}{2}\end{pmatrix} ,
\end{equation*}
and from \autoref{eq: varphi 3}, its image in $\varphi_m$ is given by
\small{\begin{multline}\label{eq: image other lv 2 at 3}
\left(\begin{matrix} 
2d_1+4\left(d_0^2+d_1 d_0+d_0+k d_1^2\right)+1 & -4 \left(c_0^2+c_1 c_0+k c_1^2\right)
\\ -4 \left(b_0^2+b_1 b_0+k b_1^2\right) & 2a_1 + 4\left( a_0^2+ a_1 a_0+ a_0+ k a_1^2\right)+1
\\ 2(b_0+b_1)+4 \left(d_0 b_0+b_1 d_0+k b_1 d_1\right) & -2c_0-4 \left(a_0 c_0+a_1 c_0+k a_1 c_1\right)
\\  -2b_1-4 \left(d_0 b_1-b_0 d_1\right) & -2c_1 +4\left(a_1 c_0-a_0 c_1\right)
\end{matrix}\right.
\\
\begin{matrix}
2(c_1)+4\left(2d_0c_0+d_1c_0+c_0+c_1 d_0+2kc_1d_1\right)
\\ -2b_1-4\left(2a_0b_0+a_1b_0+b_0+a_0b_1+2k a_1b_1\right)
\\ 2(a_0+a_1+d_0)+ 4\left(d_0 a_0+b_0 c_0+b_1 c_0+k b_1c_1+a_1 d_0+k a_1 d_1\right)+1
\\ 2(d_1-a_1)-4\left(d_0 a_1+b_1 c_0-b_0 c_1-a_0 d_1\right)
\end{matrix}
\\ \left.
\begin{matrix}
2c_0+ 4 (d_0c_0-2kd_1c_0+d_1c_0+k c_1+2kc_1d_0+kc_1 d_1)
\\ -2(b_0+b_1)-4\left(a_0 b_0+2k a_1 b_0-k b_1-2k a_0 b_1+a_0 b_1+k a_1 b_1 \right) 
\\ 2(k a_1-k d_1) +4(b_0 c_0+b_1 c_0) +4k\left(d_0 a_1-b_1 c_0+b_0 c_1+b_1 c_1-a_0 d_1\right)
\\ 1+2(a_0+d_0+d_1)-4\left(-d_0a_0-d_1 a_0+b_0 c_0+b_1 c_0+k b_1 c_1-k a_1 d_1\right)
\end{matrix} \right) .
\end{multline}}\normalsize

Note that $N$ has determinant 1. This implies
 \begin{multline}\label{eq: determinant 3}
1+2a_0+a_1-4b_0c_0-2b_1c_0-2b_0c_1-2b_1c_1+2d_0+4a_0d_0+ 2a_1d_0\\+d_1+2a_0d_1+4b_1c_1k-4a_1d_1k=1
\end{multline}
\begin{equation*}
\text{ and } a_1-2b_1c_0-2b_0c_1-2b_1c_1+2a_1d_0+d_1+2a_0d_1+2a_1d_1 =0
\end{equation*}
so $a_1\equiv d_1\mod 2$. 

Let $N'$ be
\begin{equation*}
\begin{pmatrix}
\varphi_m(N)& 0_{4\times3} \\
0_{3\times4} & I_{3\times3} \end{pmatrix} \in\SO^+(P_m;\Z).
\end{equation*}
It is now also true as in the case of $m\equiv1,2\mod4$ in \autoref{prop: only need level 2 1,2} that $A_{m}N'A_{m}^{-1}\in \SO^+(F_6;\Q)$. However, it requires some clever substitutions using \autoref{eq: determinant 3}.

Unfortunately, it is not always the case that $A_{m}N'A_{m}^{-1}\equiv I\mod(2)$, indeed 
\begin{equation}
A_{m}N'A_{m}^{-1}\equiv \begin{pmatrix}
1&0&b_1+c_1&w\beta&x\beta&y\beta&z\beta 
\\ 0&1&\beta&w\beta&x\beta&y\beta&z\beta
\\ \beta&\beta&1&0&0&0&0 
\\ w\beta& w\beta&0&1&0&0&0 
\\ x\beta& x\beta&0&0&1&0&0 
\\ y\beta& y\beta&0&0&0&1&0 
\\ z\beta& z\beta&0&0&0&0&1
\end{pmatrix}   \mod 2
\end{equation}
where $\beta=b_1+c_1$. However, consider the subgroup of $\PSL(2,\mathcal{O}_m)_{(2)}$ of index 2 given by
\begin{equation}\label{eq: delta m3m4}
\Delta_m:=\left\{ \begin{pmatrix} 2a_0+1+2a_1\frac{1+\sqrt{-m}}{2} & 2b_0+2b_1\frac{1+\sqrt{-m}}{2}\\ 2c_0+2c_1\frac{1+\sqrt{-m}}{2}& 2d_0+1+2d_1\frac{1+\sqrt{-m}}{2}\end{pmatrix}: b_1\equiv c_1\mod2 \right\} .
\end{equation}
Then for $N\in\Delta_m$ $A_{m_i}N'A_{m_i}^{-1}\in \SO^+(F;\Z)_{(2)}$ and the proposition follows from \autoref{eq: level 2 index}.
\end{proof}

\begin{remark}\label{rmk: between level 4 and 3}
From \autoref{eq: image other lv 2 at 3} and \autoref{eq: delta m3m4} we see that 
\begin{equation*}
\varphi_m\left(\PSL(2,\mathcal{O}_m)\right)_{(4)}\leq \varphi_m\left(\PSL(2,\mathcal{O}_m)_{(2)}\right) \leq \varphi_m\left(\PSL(2,\mathcal{O}_m)\right)_{(2)}
\end{equation*}
and
\begin{equation*}
\PSL(2,\mathcal{O}_m)_{(4)} \leq \Delta_m \leq \PSL(2,\mathcal{O}_m)_{(2)}
\end{equation*}
so $\Delta_m$ is congruence of level 4, but not principal congruence.
\end{remark}

%%%%%%%%%%          %%%%%%%%%%          %%%%%%%%%%
\subsection{Virtual retracts and the RFRS condition}
Let $C$ be a hyperbolic right-angled Coxeter polytope of dimension $n>2$, say in $\H^n$. 
Let $\Gamma$ be the group generated by reflections on the faces of $C$. Define the graph $\Delta$ with a vertex for each face of $C$ and an edge between two vertices if they meet at a right angle.
\begin{equation*}
\Gamma=\langle r\in V(\Delta) : r^2=1, (rs)^2=1 \text{ if } (r,s)\in E(\Delta) \rangle 
\end{equation*}
where the generators $r$ are realized in $\Isom(\H^n)$ by reflections on the corresponding face.

Let $A$ be a subset of $V(\Delta)$ with induced subgroup $\Delta_A$ and $B$ the complement of $A$ in $V(\Delta)$ with induced subgraph $\Delta_B$. Let $\Gamma_A$ be the subgroup of $\Gamma$ generated by the elements of $A$, and similarly define $\Gamma_B$. Let $N(\Gamma_B)$ denote the normal closure of $\Gamma_B$ in $\Gamma$. 

\begin{lemma}\label{lem: retracts}
$\Gamma/N(\Gamma_B)=\Gamma_A$, that is, $\Gamma_A$ is both a subgroup and a quotient of $\Gamma$. In particular, if $F$ is a hyperface of dimension $n-k\geq2$ in $C$ and $\Gamma_F$ the reflection group of $F$ in $\Isom(\H^{n-k})$, then $\Gamma$ retracts onto $\Gamma_F$. 
\end{lemma}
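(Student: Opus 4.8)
The plan is to prove the group‑theoretic statement $\Gamma/N(\Gamma_B)=\Gamma_A$ purely from the Coxeter presentation, and then read off the geometric consequence. I would start from the standard presentation of $\Gamma$ on the generating set $V(\Delta)$ with relations $r^2=1$ for $r\in V(\Delta)$ and $(rs)^2=1$ for $(r,s)\in E(\Delta)$ (equivalently, $[r,s]=1$ across each edge). The quotient $\Gamma/N(\Gamma_B)$ is obtained by adjoining the relations $b=1$ for every $b\in B$, and I would then eliminate the generators of $B$ by Tietze transformations: each relation $b^2=1$ with $b\in B$ becomes trivial, each relation $(bb')^2=1$ with $b,b'\in B$ disappears, and each relation $(ab)^2=1$ coming from an edge between $a\in A$ and $b\in B$ collapses to $a^2=1$, which is already present. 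What survives is exactly $\langle a\in A : a^2=1,\ (aa')^2=1 \text{ for }(a,a')\in E(\Delta_A)\rangle$, i.e. the Coxeter presentation attached to the induced subgraph $\Delta_A$, so $\Gamma/N(\Gamma_B)$ is the Coxeter group $W(\Delta_A)$. The point is then to identify this abstractly presented group with the subgroup $\Gamma_A=\langle A\rangle\le\Gamma$: by the basic structure theory of Coxeter systems — the theorem on standard parabolic subgroups of Bourbaki/Tits — the natural surjection $W(\Delta_A)\twoheadrightarrow\Gamma_A$ is an isomorphism, so $(\Gamma_A,A)$ is itself a Coxeter system with Coxeter graph $\Delta_A$. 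This yields an isomorphism $\Gamma/N(\Gamma_B)\xrightarrow{\sim}\Gamma_A$ carrying the class of $a\in V(\Delta)$ to $a\in\Gamma_A$.

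For the retraction I would consider the composite $\Gamma_A\hookrightarrow\Gamma\twoheadrightarrow\Gamma/N(\Gamma_B)\cong\Gamma_A$. On each generator $a\in A$ it sends $a$ to its class, which corresponds to $a$, so the composite is the identity on $\Gamma_A$; hence the quotient map (followed by the identification) is a retraction of $\Gamma$ onto $\Gamma_A$, and $\Gamma_A$ is simultaneously a subgroup, a quotient, and a retract of $\Gamma$.

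For the statement about a codimension‑$k$ hyperface $F$ with $n-k\ge 2$, I would use that a right‑angled Coxeter polytope is simple (Vinberg): $F$ is the intersection $f_1\cap\cdots\cap f_k$ of exactly $k$ facets of $C$, which are pairwise perpendicular since $C$ is right‑angled, so their bounding hyperplanes meet in a totally geodesic copy of $\H^{n-k}$, namely the subspace spanned by $F$. Let $A$ be the set of facets $f$ of $C$ with $\dim(f\cap F)=n-k-1$; each such $f$ meets every $f_i$, hence $H_f\perp H_{f_i}$ for all $i$, so the reflection $r_f$ preserves $\H^{n-k}$ and restricts there to the reflection in the facet hyperplane $H_f\cap\H^{n-k}$ of $F$, and running over $A$ produces precisely the facet reflections of $F$. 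Two facets $f\cap F$ and $f'\cap F$ of $F$ are perpendicular if and only if $H_f\perp H_{f'}$, i.e. if and only if $(f,f')\in E(\Delta)$, so the Coxeter graph of $F$ is exactly $\Delta_A$. By the parabolic subgroup theorem (for $\Gamma_A$) and Poincaré's polyhedron theorem (for $\Gamma_F$), both groups are canonically isomorphic to $W(\Delta_A)$ and the restriction map $\Gamma_A\to\Gamma_F$ intertwines these isomorphisms; so $\Gamma_F$ is identified with $\Gamma_A\le\Gamma$, and the first part gives a retraction $\Gamma\twoheadrightarrow\Gamma_A=\Gamma_F$.

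The one genuine input, and the step I would be most careful about, is the theorem on standard parabolic subgroups, which is what guarantees that $\langle A\rangle$ is the full Coxeter group on $\Delta_A$ rather than a proper quotient of it; without it the Tietze computation only shows that $\Gamma/N(\Gamma_B)$ maps onto $\Gamma_A$, not that it equals $\Gamma_A$. That fact, together with the elementary but necessary point that right‑angled polytopes are simple (so $F$ is cut out transversally by exactly $k$ mutually orthogonal facets and the facets of $F$ are exactly the $f\cap F$ with $f\in A$), are the only non‑bookkeeping ingredients; the remainder is manipulation of generators and relations.
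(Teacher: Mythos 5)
Your proof is correct, and the two halves compare differently with the paper. The algebraic half is essentially the paper's argument: the paper performs exactly your Tietze elimination to present $\Gamma/N(\Gamma_B)$ on the vertices of $\Delta_A$, and then silently identifies that abstract RACG with the subgroup $\Gamma_A\le\Gamma$; your explicit appeal to the theorem on standard parabolic subgroups supplies that identification (and, as your own retraction observation shows, one could also get it for free: the composite $W(\Delta_A)\to\Gamma\to\Gamma/N(\Gamma_B)$ is the identity on generators, so $W(\Delta_A)\to\Gamma$ is injective). Where you genuinely diverge is the geometric half. The paper treats only codimension one directly --- reflections in the facets meeting $F$ stabilize the hyperplane of $F$ and restrict to its facet reflections, the needed commutations coming from the fact quoted from \cite{AVS} that an $(n-3)$-face lies in exactly three facets --- and reaches lower-dimensional hyperfaces by composing such codimension-one retractions. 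You instead handle codimension $k$ in one shot, using simplicity to write $F=f_1\cap\cdots\cap f_k$, to see that each facet of $F$ comes from a unique further facet $f$ orthogonal to all the $f_i$, and to take $A$ to be the set of such $f$; this buys a single non-inductive statement at the price of doing the simplicity/orthogonality bookkeeping in arbitrary codimension, whereas the paper's iteration needs only the codimension-one facts. One small caveat: in your claim that the Coxeter graph of $F$ is exactly $\Delta_A$, the direction ``$H_f\perp H_{f'}$ implies the facets $f\cap F$ and $f'\cap F$ actually meet in $F$'' is not purely combinatorial and deserves a word (e.g.\ the restricted reflections are distinct and commute, so their mirrors intersect orthogonally, which for the Coxeter polytope $F$ forces adjacency of the facets); this is at the same level of informality as the paper's own identification of $\Gamma_S$ with $\Gamma_F$, so it is not a gap relative to the paper's standard, but it is the one spot I would flesh out.
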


\begin{proof}
Consider the group presentation 
\begin{equation*}
\Gamma/N(\Gamma_B)=\langle r\in V(\Delta) : r^2=1, (rs)^2=1 \text{ if } (r,s)\in E(\Delta), t=1 \text{ if } t\in V(\Delta_B) \rangle 
\end{equation*}
since if $t\in V(\Delta_B)$ then for any $r\in \Delta$, with $(r,t)\in E(\Delta)$, the relation $(rt)^2=1$ is equivalent in $\Gamma/N(\Gamma_B)$ to $r^2=1$, which we already had. Therefore, 
\begin{align*}
\Gamma/N(\Gamma_B) &=\langle r\in V(\Delta)- V(\Delta_B) : r^2=1, (rs)^2=1 \text{ if } (r,s)\in E(\Delta) \rangle \\
&=\langle r\in V(\Delta_A) : r^2=1, (rs)^2=1 \text{ if } (r,s)\in E(\Delta_A) \rangle \\
&= \Gamma_A.
\end{align*}

From \cite{AVS} we have the following facts:
\begin{itemize}
\item[$\circ$] Any face of a hyperbolic right-angled polytope of dimension $n>2$ is a hyperbolic right-angled polytope of dimension $n-2$.
\item[$\circ$] Every $k$-dimensional face of a hyperbolic right-angled polytope  belongs only to $n-k$ many hyperfaces. 
\end{itemize}
In particular, any $(n-3)$-dimensional face (or vertex in the case $n=3$) belongs only to 3 hyperfaces. This also means that any two $(n-1)$ faces which intersect, must intersect in an $(n-2)$ face.

Now, suppose that $F$ is a face of $C$, i.e. a hyperplane of dimension $(n-1)$. Identify $H=\H^{n-1}$ with the hyperplane of $\H^n$ containing the face $F$. Let $r$ be the reflection on $H$, and hence on $F$. Let $F'$ be a face of $C$ which meets $F$ at a right-angle. Then $r'$, reflection on $F'$, stabilizes $H$. Indeed, restricted to $H$, $r'$ is a reflection on the face of $F$ where $F'$ meets $F$. 

Suppose $F_1$ and $F_2$ both meet $F$ and intersect at an $(n-3)$-dimensional face of $C$. By the facts above, $F_1$ and $F_2$ intersect at an $(n-2)$-dimensional face at a right-angle. Therefore for the corresponding reflections $r_1$ and $r_2$, we have that $(r_1 r_2)^2=1$.

Let $S$ be the set of faces of $C$ which meet $F$ in $\H^n$ and not including $F$ itself. The map $\phi: \Gamma\rightarrow\Gamma_S$ given by
\begin{equation*}
r_i \mapsto
\begin{cases}
r_i & \text{ if } F_i\in S \\
1 & \text{ if } F_i\not\in S
\end{cases} 
\end{equation*}
is a retraction.

For a lower-dimensional hyperface (of dimension at least 2), repeat this process and compose to get a retraction from $\Gamma$ onto $\Gamma_F$.
\end{proof}

\begin{proof}[Proof of \autoref{prop: retract}]
Each of the 2-dimensional faces of the hyperbolic Coxeter polyhefrom of dimension 6 with corresponding RACG $\SO^+(F_6;\Z)_{(2)}$ is a 2-dimensional hyperbolic Coxeter polygon with RACG $\SO^+(F_2;\Z)_{(2)}$ (refer to \autoref{th: RACG}). The intersection of the image of $\Delta_m$ (after $\varphi_m$ and conjugation by $A_m$) is exaclt
\begin{equation*}
\Delta_m\cap \PSL(2,\Z) =\PSL(2,\Z)_{(2)} 
= \left\langle \begin{pmatrix} 1&2\\0&1 \end{pmatrix} ,
\begin{pmatrix} 1&0\\2&1 \end{pmatrix} \right\rangle
\end{equation*}
a free group on 2 generators.
\end{proof}

%%%%%%%%%%          %%%%%%%%%%          %%%%%%%%%%
\section{Cocompact examples}\label{sec: cocompact}
In this section we first consider a family of cocompact Kleinian known to virtually embed in a RACG. We then focus on a particular group in the family and a non-Haken Kleinian group in its commensurability class. 

%%%%%%%%%%          %%%%%%%%%%          %%%%%%%%%%
\subsection{A family of cocompact Kleinian groups}
Choose any positive prime $m\equiv-1\mod(8)$ and let $Q'_m$ be the quadratic form
\begin{equation}
Q'_m:=-mx_1^2 + x_2^2 + x_3 ^2 + x_4^2.
\end{equation} 
By Dirichlet's Theorem there are infinitely many such primes. For different $m$, the groups $\SO^+(Q'_m;\Z)$ are non-comensurable cocompact arithmetic lattices which all virtually embed in a RACG (see \cite[Lemma 4.6(1),\S 6]{ALR}).

Let $P'_m$ be the quadratic form $-m x_1^2 + x_2^2 + x_3 ^2 + x_4^2 + mx_5^2$. Then $P'_m = Q'_m \oplus \langle m \rangle$. The group $\SO^+(Q_m;\Z)$ is naturally a subgroup of $\SO^+(P_m;\Z)$.

\begin{lemma} \label{lem: coco first col}
The reduction modulo $m$ of an the first column of an element in $\SO^+(Q'_m;\Z)$ is $(\pm1,0,0,0)$. 
\end{lemma}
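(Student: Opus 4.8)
The plan is to work modulo $m$ directly with the integral equation $M^t S_{Q'_m} M = S_{Q'_m}$ that defines membership in $\mathrm{O}(Q'_m;\Z)$. Write $M = (v \mid \ast)$ where $v = (v_1,v_2,v_3,v_4)^t \in \Z^4$ is the first column of $M$. The symmetric matrix of $Q'_m$ is $S = \mathrm{diag}(-m,1,1,1)$, so the $(1,1)$-entry of $M^t S M$ is $Q'_m(v) = -m v_1^2 + v_2^2 + v_3^2 + v_4^2$, and this must equal the $(1,1)$-entry of $S$, namely $-m$. Reducing mod $m$ gives $v_2^2 + v_3^2 + v_4^2 \equiv 0 \pmod m$. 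The key number-theoretic input is that since $m \equiv -1 \pmod 8$, the form $x^2+y^2+z^2$ is anisotropic mod $m$ in the relevant sense: more precisely, I would argue that $-1$ is not a sum of two squares mod $m$, equivalently that the only solution to $x^2+y^2+z^2 \equiv 0 \pmod m$ with, say, a primitivity constraint is the trivial one, forcing $v_2 \equiv v_3 \equiv v_4 \equiv 0 \pmod m$.

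The cleanest route: for a prime $p$, the ternary form $x^2+y^2+z^2$ over $\F_p$ represents $0$ nontrivially for every odd $p$ (ternary forms over finite fields are isotropic), so I cannot conclude from $v_2^2+v_3^2+v_4^2\equiv 0$ alone. Instead I would use the additional columns. Let $w$ be another column of $M$; orthogonality of columns under $S$ and the remaining diagonal entries of $M^tSM = S$ give $-m v_1 w_1 + v_2 w_2 + v_3 w_3 + v_4 w_4 = 0$ and $-m w_1^2 + w_2^2+w_3^2+w_4^2 \in \{1\}$ (for columns $2,3,4$). Reducing the whole matrix identity mod $m$: if $\bar M$ denotes the reduction, then $\bar M^t \bar S \bar M = \bar S$ with $\bar S = \mathrm{diag}(0,1,1,1) \pmod m$. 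Writing $\bar M$ in block form with first row/column split off, the lower-right $3\times 3$ block of $\bar M$ together with the constraint shows that the $3\times 3$ truncation (rows and columns $2,3,4$) of $\bar M$ is an orthogonal matrix over $\Z/m$, hence invertible, while the relation coming from the degenerate first coordinate of $\bar S$ forces $(v_2,v_3,v_4)$, viewed against that invertible block, to vanish mod $m$. Concretely, $Q'_m(v)\equiv 0$ combined with $v$ being $S$-orthogonal mod $m$ to three vectors whose $(2,3,4)$-parts span $(\Z/m)^3$ forces $v_2=v_3=v_4=0$ in $\Z/m$.

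Once $v_2\equiv v_3\equiv v_4\equiv 0\pmod m$ is established, return to $-m v_1^2 + v_2^2+v_3^2+v_4^2 = -m$ as an identity in $\Z$: writing $v_i = m u_i$ for $i=2,3,4$ gives $-v_1^2 + m(u_2^2+u_3^2+u_4^2) = -1$, so $v_1^2 \equiv 1 \pmod m$ and moreover $v_1^2 = 1 + m(u_2^2+u_3^2+u_4^2) \geq 1$ with equality iff $u_2=u_3=u_4=0$. To pin down $v_1 = \pm 1$ exactly (not merely mod $m$), I would invoke that $M \in \SO^+(Q'_m;\Z)$ preserves the hyperboloid $Q'_m = 1$ and its positive sheet; but more directly, since the first standard basis vector $e_1$ has $Q'_m(e_1) = -m$ and $M e_1 = v$ must also satisfy $Q'_m(v) = -m$ with $v$ integral, and any integral vector with $Q'_m(v)=-m$ must have $v_2,v_3,v_4$ divisible by $m$ (just shown) hence $v_1^2 = 1 + m(u_2^2+u_3^2+u_4^2)$; if some $u_i\neq 0$ then $v_1^2 \geq 1+m$, but then $v = (v_1, mu_2,mu_3,mu_4)$ and one checks this cannot be completed to an integral matrix in $\mathrm{O}(Q'_m;\Z)$ — the determinant/orthogonality constraints on the remaining columns fail over $\Z$ (the relevant Gram computation forces a denominator). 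Thus $u_2=u_3=u_4=0$ and $v_1=\pm 1$, giving the first column $(\pm 1,0,0,0)$.

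The main obstacle I anticipate is the middle step: deducing $v_2\equiv v_3\equiv v_4\equiv 0\pmod m$ rather than merely $v_2^2+v_3^2+v_4^2\equiv 0$, since ternary quadratic forms over $\F_m$ are always isotropic. This is exactly where the hypothesis $m\equiv -1\pmod 8$ and the full orthogonality of $M$ (not just one column) must be combined — one needs the $3\times 3$ block of $M$ in coordinates $2,3,4$ to be invertible mod $m$ so that $S$-orthogonality of the degenerate direction kills $(v_2,v_3,v_4)$ completely. A secondary subtlety is upgrading "$v_1\equiv\pm1\pmod m$'' to "$v_1=\pm1$'' over $\Z$, which should follow from minimality of $-m$ as the value $Q'_m$ takes on $e_1$ together with integrality of the remaining columns; I would double-check that $m$ prime (not just $\equiv -1 \bmod 8$) is not secretly needed here, though the statement as given assumes $m$ prime so that is available.
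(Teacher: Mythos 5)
Your core argument is correct and is essentially the paper's: reduce the Gram identity $M^tS_{Q'_m}M=S_{Q'_m}$ modulo $m$, where $S_{Q'_m}\equiv\mathrm{diag}(0,1,1,1)$, note that the lower-right $3\times3$ block $M_3$ of the reduction satisfies $M_3^tM_3\equiv I$ and is therefore invertible over $\F_m$, so the off-diagonal block relation kills $(v_2,v_3,v_4)$ modulo $m$, and then $-mv_1^2+v_2^2+v_3^2+v_4^2=-m$ gives $v_1^2\equiv1$, hence $v_1\equiv\pm1$ since $m$ is prime. In fact you are more careful than the paper's prose at the key step: the paper's stated reason (that $N_2^tN_2\equiv0$ alone forces $N_2\equiv0$) runs into exactly the isotropy issue you flag, and the invertibility of the $3\times3$ block --- which is visible in the paper's displayed block identity --- is the correct fix, so your version is the argument as it should be written. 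Two corrections to your surrounding commentary, though: first, the congruence $m\equiv-1\pmod 8$ plays no role in this lemma, only the primality of $m$ (it is needed elsewhere, to make the lattice cocompact and to write $m$ as a difference of squares in \autoref{prop: congruenceembed}); second, and more importantly, your final paragraph's attempt to upgrade $v_1\equiv\pm1\pmod m$ to $v_1=\pm1$ in $\Z$, justified by the unproved claim that such a first column ``cannot be completed'' to an integral isometry, is false --- the remark immediately following the lemma exhibits an element of $\SO^+(Q'_7;\Z)$ with first column $(6,-7,-14,0)^t$. Fortunately that step is not needed: the lemma asserts only that the \emph{reduction modulo $m$} of the first column is $(\pm1,0,0,0)$, which your argument already establishes before the spurious last step.
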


\begin{proof}
Suppose $N\in\pi_m\left(SO^+(Q'_m;\Z)\right)$. Then $N$ can be written as a block matrix
\begin{equation*}
\begin{pmatrix} N_0 & N_1 \\ N_2 & N_3 \\ \end{pmatrix} 
\end{equation*}
where $N_0$ is a $1\times 1$ matrix, $N_1$ is $1\times 3$, $N_2$ is $3\times 1$ and $N_3$ is $3\times 3$. Since $N$ must satisfy $N^t\pi_m(S_{Q'_m})N=\pi_m(S_{Q'_m})$ we have
\begin{align}
N^t\pi_m(S_{Q'_m})N & = 
\begin{pmatrix} N_0^t&N_2^t \\ N_1^t&N_3^t \\ \end{pmatrix} 
\begin{pmatrix} 0_{1\times 1} & 0_{1\times 3} \\ 0_{3\times 1} & I_{3\times3} \end{pmatrix}
\begin{pmatrix} N_0 & N_1 \\ N_2 & N_3 \\ \end{pmatrix}
\\ & = \begin{pmatrix}
N_2^tN_2& N_2^tN_3 \\ N_3^tN_2 &N_3^tN_3 \\
\end{pmatrix} \\
& \equiv \begin{pmatrix} 0_{1\times 1} & 0_{1\times 3} \\ 0_{3\times 1} & I_{3\times3} \end{pmatrix} .
\end{align}

Since $m$ is prime, $\Z/m\Z$ is a field and it must be that since $N_2^tN_2\equiv 0$, so $N_2\equiv 0_{1\times 3}$. Therefore, the the first column of $N$ has form $(a,bm,cm,dm)^t$ and must satisfy $-ma^2+(mb)^2+(mc)^2+(md)^2=-m$, equivalently $-a^2+mb^2+mc^2+md^2=-1$. This implies $a^2= 1\mod m$ and so $a= \pm1\mod m$ and the first column is $(\pm1,0,0,0)^t$ modulo $m$.
\end{proof}

\begin{remark}
An element $\mathrm{O}^+(Q'_m;\Z)$ is in $\SO^+(Q'_m;\Z)$ if the $(1,1)$-matrix entry is positive. Also, the group $\SO^+(Q'_m;\Z)$ contains elements whose first column reduces to $(-1,0,0,0)^t$. For example, when $m=7$, 
\begin{equation*}
\begin{pmatrix}
6&1&2&0 \\ -7&-2&-2&0 \\ -14&-2&-5&0 \\ 0&0&0&1
\end{pmatrix} \in \SO^+(Q'_7;\Z).
\end{equation*}
\end{remark}

Let $\Delta^m$ be the index 2 subgroup of $\SO^+(Q'_m;\Z)$ of elements whose $(1,1)$-matrix entry is equivalent to $1$ modulo $m$. 

\begin{prop}
Let $m=8k-1$ be a positive prime number. The group $\Delta^m_{(2)}$ embeds in the RACG $\SO^+(F_4;\Z)_{(2)}$.
\label{prop: congruenceembed}\end{prop}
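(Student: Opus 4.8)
The plan is to mirror the proofs of \autoref{prop: only need level 2 1,2} and \autoref{prop: only need level 2 3}: produce an explicit $A_m\in\GL(5,\Q)$ realizing the $\Q$-equivalence $A_m^{t}S_{F_4}A_m=S_{P'_m}$ of $P'_m$ with $F_4$, conjugate by it, and check that the relevant elements land in the integral level-$2$ group. Here $S_{P'_m}=\mathrm{diag}(-m,1,1,1,m)$ and $S_{F_4}=\mathrm{diag}(-1,1,1,1,1)$ already agree on the three middle coordinates, so $A_m$ only has to move the coordinates carrying $-m$ and $m$. Since $m$ is odd, $\bigl(\tfrac{m+1}{2}\bigr)^{2}-\bigl(\tfrac{m-1}{2}\bigr)^{2}=m$, and one may take $A_m$ to be the identity on the three middle coordinates and the symmetric block $\left(\begin{smallmatrix}\frac{m+1}{2}&\frac{m-1}{2}\\ \frac{m-1}{2}&\frac{m+1}{2}\end{smallmatrix}\right)$ on the other two. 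Then $A_m$ is an \emph{integer} matrix with $\det A_m=m$, so $A_m^{-1}$ has entries in $\tfrac1m\Z$, and $A_m\SO^+(P'_m;\Q)A_m^{-1}=\SO^+(F_4;\Q)$. Composing with the padding $N\mapsto N'=\left(\begin{smallmatrix}N&0\\ 0&1\end{smallmatrix}\right)$, which is the natural inclusion $\SO^+(Q'_m;\Z)\hookrightarrow\SO^+(P'_m;\Z)$, yields an injection $\SO^+(Q'_m;\Z)\hookrightarrow\SO^+(F_4;\Q)$ whose image lies in $\SO^+$ because $A_m$ respects the positive sheets, as in the earlier propositions. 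Note that, unlike the non-cocompact case, no four-square decomposition is needed: the anisotropic part $\langle 1,1,1\rangle$ of $Q'_m$ is already diagonal, and $m\equiv-1\bmod8$ enters only through primality — via \autoref{lem: coco first col} and, upstream, to guarantee that $Q'_m$ is $\Q$-anisotropic so that $\SO^+(Q'_m;\Z)$ is cocompact.

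Next I would take $N\in\Delta^m_{(2)}$ and compute $M:=A_mN'A_m^{-1}$ entrywise. Write $N=\left(\begin{smallmatrix}p&\mathbf r^{t}\\ \mathbf s&T\end{smallmatrix}\right)$ with $p$ the $(1,1)$-entry, $\mathbf r,\mathbf s\in\Z^{3}$, $T\in M_{3}(\Z)$. The hypotheses supply three congruences to feed in: the defining condition of $\Delta^m$ gives $p\equiv1\bmod m$; \autoref{lem: coco first col} gives $\mathbf s\equiv\mathbf 0\bmod m$; and $N\in\SO^+(Q'_m;\Z)_{(2)}$ gives $p\equiv1$, $\mathbf r\equiv\mathbf s\equiv\mathbf 0$, $T\equiv I_3\bmod2$. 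Since $A_m$ is integral, $A_mN'$ is integral, and the only denominators in $M$ come from the factor $1/m$ in the two columns of $A_m^{-1}$ touched by the block. Writing $a=\tfrac{m+1}{2}$, $b=\tfrac{m-1}{2}$ (so $a^2-b^2=m$), the two ``diagonal'' such entries have the form $\tfrac{a^{2}p-b^{2}}{m}=\tfrac{a^{2}(p-1)}{m}+1$ and $\tfrac{a^{2}-b^{2}p}{m}=1-\tfrac{b^{2}(p-1)}{m}$, which are integers because $p\equiv1\bmod m$, while the remaining $1/m$-entries are integer multiples of $\tfrac{s_i}{m}$ (integral by \autoref{lem: coco first col}) or of $\tfrac{p-1}{m}$. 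Hence $M$ is integral; it has determinant $1$, so $M\in\SO^+(F_4;\Z)$.

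Finally I would reduce $M$ modulo $2$. Since $m\equiv3\bmod4$ (as $m\equiv-1\bmod8$), $a=\tfrac{m+1}{2}$ is even and $b=\tfrac{m-1}{2}$ is odd; and writing $p-1=mq$, the integer $q$ is even since $p-1$ is even and $m$ is odd. Substituting these parities together with $\mathbf r\equiv\mathbf s\equiv\mathbf 0$, $T\equiv I_3\bmod2$ into the explicit $M$ collapses it to $I_5\bmod2$: the three middle columns become those of $I_5$ since $T\equiv I_3$ and the entries $a r_i$, $b r_i$ are even; the two remaining diagonal entries are $a^{2}q+1$ and $1-b^{2}q$, both $\equiv1\bmod2$ since $q$ is even; and the remaining off-diagonal entries are multiples of the even integers $q$ and $s_i/m$. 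Thus $M\in\SO^+(F_4;\Z)_{(2)}$, the orientation-preserving index-$2$ subgroup of the RACG $\mathrm O^+(F_4;\Z)_{(2)}$ of \autoref{th: RACG} (case $n=4$). The map $N\mapsto M$ is the composite of the injection $N\mapsto N'$ with conjugation by $A_m$, hence injective, which gives the asserted embedding $\Delta^m_{(2)}\hookrightarrow\SO^+(F_4;\Z)_{(2)}$.

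The main obstacle is the coordinated bookkeeping of the last two steps: one must check that the two columns of $M$ carrying the $1/m$ are \emph{simultaneously} integral and congruent modulo $2$ to the corresponding columns of $I_5$, and this genuinely uses all three congruence hypotheses at once — the $\Delta^m$-condition and \autoref{lem: coco first col} modulo the odd prime $m$, and the principal congruence of level $2$ modulo $2$ — together with the parities forced by $m\equiv-1\bmod8$. Verifying that conjugation by $A_m$ carries the positive sheet to the positive sheet is routine and identical to the earlier propositions.
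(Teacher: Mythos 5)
Your proposal is correct and is essentially the paper's own argument: your block $\left(\begin{smallmatrix}\frac{m+1}{2}&\frac{m-1}{2}\\ \frac{m-1}{2}&\frac{m+1}{2}\end{smallmatrix}\right)$ is, for $m=8k-1$, exactly the paper's matrix $A'_m$ built from $m=(4k)^2-(4k-1)^2$ (up to a harmless sign on the off-diagonal entries), and the rest — padding to $\SO^+(P'_m;\Z)$, using \autoref{lem: coco first col} together with the $\Delta^m$ and level-$2$ conditions, and checking $A_mN'A_m^{-1}\equiv I\bmod 2$ — matches the paper's computation, just written in block form rather than as an explicit $5\times5$ matrix.
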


\begin{proof}
Note first that $m=(4k)^2-(4k-1)^2$, a difference of two squares. Consider the $5\times 5$ matrix
\begin{equation}
A'_m = 
\begin{pmatrix}
4k &0&0&0& -(4k-1) \\
0&1&0&0&0 \\
0&0&1&0&0 \\
0&0&0&1&0 \\
-(4k-1) &0&0&0& 4k 
\end{pmatrix}
\label{mat: Am 5}\end{equation}
with inverse
\begin{equation}
(A'_m)^{-1} = 
\begin{pmatrix}
\frac{4k}{m} &0&0&0&\frac{4k-1}{m} \\
0&1&0&0&0 \\
0&0&1&0&0 \\
0&0&0&1&0 \\
\frac{4k-1}{m} &0&0&0&\frac{4k}{m} 
\end{pmatrix} .
\label{mat: AdI}\end{equation}

Let $S_F$ be the diagonal matrix associated to $F_4$ and $S_{P'_m}$ the symmetric matrix associated to $P'_m$:
\begin{equation}
S_F = 
\begin{pmatrix}
-1 & 0 & 0 & 0 & 0 \\
0 & 1 & 0 & 0 & 0 \\
0 & 0 & 1 & 0 & 0 \\
0 & 0 & 0 & 1 & 0 \\
0 & 0 & 0 & 0 & 1
\end{pmatrix}
\hspace{10px}\text{and}\hspace{10px}
S_{P'_m} = 
\begin{pmatrix}
-m & 0 & 0 & 0 & 0 \\
0 & 1 & 0 & 0 & 0 \\
0 & 0 & 1 & 0 & 0 \\
0 & 0 & 0 & 1 & 0 \\
0 & 0 & 0 & 0 & m
\end{pmatrix} .
\label{mat: F and Qd}\end{equation}
Then $(A'_m)^t S_F A'_m=S_{P'_m}$. Since $A'_m$ has determinant $m$, it is in $\GL(5,\Q)$. Therefore, the forms $F_4$ and $P'_m$ are equivalent over $\Q$ and thus $A'_m SO^+(P'_m;\Q) (A'_m)^{-1}=SO^+(F_4;\Q)$ with $A'_m SO^+(P'_m;\Q) (A'_m)^{-1}$ and $SO^+(F_4;\Z)$ commensurable.

By \autoref{lem: coco first col}, a matrix $N$ in $\Delta^m_{(2)}$ sits naturally in $SO^+(P'_m;\Z)$ with form
\begin{equation*}
\begin{pmatrix}
2 m a_1+1 & 2 b_1 & 2 c_1 & 2 d_1 & 0 \\
 2 m a_2 & 2 b_2+1 & 2 c_2 & 2 d_2 & 0 \\
 2 m a_3 & 2 b_3 & 2 c_3+1 & 2 d_3 & 0 \\
 2 m a_4 & 2 b_4 & 2 c_4 & 2 d_4+1 & 0 \\
 0 & 0 & 0 & 0 & 1
\end{pmatrix}  .
\end{equation*}

Then 
\begin{equation}
A'_m N (A'_m)^{-1} = 
\end{equation}
\small{\begin{equation*}
\begin{pmatrix}
32a_1 k^2+1& 8kb_1 &8kc_1 &8kd_1 &32k^2a_1-8k a_1
\\ 8 k a_2 & 2 b_2+1 & 2 c_2 & 2 d_2 & 8 k a_2-2 a_2 
\\ 8 k a_3 & 2 b_3 & 2 c_3+1 & 2 d_3 & 8 k a_3-2 a_3 
\\ 8 k a_4 & 2 b_4 & 2 c_4 & 2 d_4+1 & 8 k a_4-2 a_4 
\\ 8ka_1-32k^2a_1& 2b_1-8kb_1& 2c_1-8kc_1 &2d_1-8kd_1& -32 a_1 k^2+16 a_1 k-2 a_1+1
\end{pmatrix} 
\end{equation*}}\normalsize
is in $\SO^+(F_4;\Z)_{(2)}$.
\end{proof}

%%%%%%%%%%          %%%%%%%%%%          %%%%%%%%%%
\subsection{A non-Haken example}

Commensurable with $\SO^+(Q'_7;\Z)$ and $\Delta^7_{(2)}$ is the fundamental group of a particular non-Haken hyperbolic 3-manifold mentioned in \cite[p.616]{ALR}. For this reason, we treat the case of $\SO^+(Q'_7;\Z)$ with more detail.

The arithmetic hyperbolic tetrahedral groups were studied in \cite{tetrahedral}. The tetrahedron $T_6$ of \cite{tetrahedral} has dihedral angles $\{\frac{\pi}{2},\frac{\pi}{3},\frac{\pi}{4};\frac{\pi}{2},\frac{\pi}{3},\frac{\pi}{4}\}$ and volume approximately $0.2222287320$. Let $\Gamma$ be the index-2 subgroup of orientation preserving isometries in the reflection group for this tetrahedron. It is an arithmetic Kleinian group generated by the matrices
\small{\begin{equation*}
\left\{
\begin{pmatrix} 1&0&0&0\\0&0&1&0\\0&1&0&0\\0&0&0&-1 \end{pmatrix} ,
\begin{pmatrix} 9/2&1/2&3/2&1/2\\ -7/2&1/2&-3/2&-1/2\\
-21/2&-3/2&-7/1&-3/2\\ 7/2&1/2&3/2&-1/2 \end{pmatrix} ,
\begin{pmatrix} 1&0&0&0\\0&0&0&1\\0&0&1&0\\0&-1&0&0 \end{pmatrix}
\right\} .
\end{equation*}}\normalsize

$\Gamma$ is commensurable with the group $\SO^+(Q'_7;\Z)$ generated by the matrices
\small{\begin{equation*}
\left\{
\begin{pmatrix} 1&0&0&0\\0&0&1&0\\0&1&0&0\\0&0&0&-1 \end{pmatrix} ,
\begin{pmatrix} 1&0&0&0\\0&0&0&1\\0&0&1&0\\0&-1&0&0 \end{pmatrix} ,
\begin{pmatrix} 6&1&2&0\\ -7&-2&-2&0\\
-14&-2&-5&0\\0&0&0&1 \end{pmatrix}
\right\} .
\end{equation*}}\normalsize

It turns out that $\Gamma\cap\SO^+(Q'_7;\Z)$ is actually the group $\Delta^7$. It has index 3 in $\Gamma$ and index 2 in $\SO^+(Q'_7;\Z)$ and is generated by the matrices
\small{\begin{multline*}
\left\{
\begin{pmatrix} 1&0&0&0\\0&0&1&0\\0&1&0&0\\0&0&0&-1 \end{pmatrix} ,
\begin{pmatrix} 1&0&0&0\\0&0&0&1\\0&0&1&0\\0&-1&0&0 \end{pmatrix} ,
\begin{pmatrix} 8&2&2&1\\ -14&-3&-4&-2\\ -14&-4&-3&-2\\7&2&2&0 \end{pmatrix} ,\right. \\
\left. \begin{pmatrix} 8&0&3&0\\0&1&0&0\\
-21&0&-8&0\\0&0&0&-1 \end{pmatrix} \right\} .
\end{multline*}}\normalsize

Magma shows that the reduction of $\Delta^7$ modulo 2 has order 24, so $[\Delta^7:\Delta^7_{(2)}]=24$ and $\Gamma$ has a special subgroup of index 72.

Both groups $\Gamma$ and $\SO^+(Q'_7;\Z)$ are contained in the same maximal arithmetic Kleinian group, (an image of) $\Gamma_\mathcal{O}$ where $\mathcal{O}$ is a maximal order in the invariant quaternion algebra (with notation as in \cite{tetrahedral}) with volume $\approx 0.1111143660$. It is shown in \cite{tetrahedral} that $\Gamma^{(2)}$, the subgroup of $\Gamma$ generated by all the squares, is the group $\Gamma_{\mathcal{O}^1}=P\rho_1(\mathcal{O}^1)$.

There is a non-Haken hyperbolic 3-manifold obtained by a $4/1$-Dehn filling on the once-punctured torus bundle with monodromy $R^2L^2$ (with the $RL$-factorization). It has volume $\approx 2.666744783$. Via Snap, one can check that its fundamental group, call it $\Theta$, is commensurable with $\Gamma$ because they have the same invariant arithmetic data. This group $\Theta$ is not contained in $\Gamma_\mathcal{O}$, but in some other maximal group. However, $\Theta^{(2)}$ is contained in $\Gamma_{\mathcal{O}^1}$ with index bounded above by 8. Therefore, $\Theta$ has a special subgroup of index bounded above by $8\cdot 72=576$.

%%%%%%%%%%          %%%%%%%%%%          %%%%%%%%%%
\section*{Acknowledgement}
The author would like to thank Yen Duong for asking a related question. The author also thanks Alan Reid for suggesting this problem and for his generous support, guidance, and encouragement.

\bibliographystyle{alpha}
\bibliography{../../CV/NSF/mybib}

%\bibitem{millson} J. J. Millson, On the first Betti number of a constant negatively curved manifold. {\it Ann. of Math.} (2) 104 (1976), no. 2, 235-247.

%%\bibitem{suzuki} Suzuki, M, {\it Group Theory I}, Grundlehren der mathematischen Wissen. 247, Springer-Verlag, (1982).

%% \bibitem{RT4m} Ratcliffe, Tschantz, {\it The volume spectrum of hyperbolic 4-manifolds}

%%\bibitem{RTcong} J.G. Ratcliffe, Tschantz,  Volumes of integral congruence hyperbolic manifolds. {\it J. Reine Angew. Math.} 488 (1997), 55-78.

\end{document}